\documentclass[12pt,a4paper]{amsart}
\usepackage{amsfonts}
\usepackage{amssymb}
\usepackage{amsmath}
\usepackage{amsthm}
\usepackage{cite}
\usepackage{enumitem}
\usepackage{tikz}
\usepackage{subfigure}

\theoremstyle{plain}
\newtheorem{thm}{Theorem}

\newtheorem{lem}{Lemma}
\newtheorem{prop}{Proposition}
\newtheorem{cor}{Corollary}
\newtheorem{bem}{Remark}

\textwidth166mm
\textheight212mm
\hoffset-2cm
\voffset-7mm


\newcommand{\vecII}[2]{
\ensuremath{
\begin{pmatrix}
#1 \\ #2 \\
\end{pmatrix}}}

\providecommand{\ov}{\overline}
\providecommand{\un}{\underline}   

\providecommand{\N}{\mathbb{N}}
\providecommand{\R}{\mathbb{R}}

\providecommand{\eps}{\varepsilon}
\providecommand{\ov}{\overline}
\providecommand{\dx}{\,dx}
\providecommand{\wto}{\rightharpoonup}
\providecommand{\skp}[2]{\langle#1,#2\rangle}

\DeclareMathOperator{\ind}{ind}

\DeclareMathOperator{\sech}{sech}
\DeclareMathOperator{\Id}{Id}
\renewcommand{\qed}{\hfill $\Box$}

\setitemize{itemsep=+2pt}
\setenumerate{itemsep=+2pt}

\begin{document}

\allowdisplaybreaks

\title[Minimal energy solutions and infinitely many bifurcating branches]{Minimal energy solutions and
infinitely many bifurcating branches for a class of saturated nonlinear Schr\"odinger systems}

\author{Rainer Mandel}
\address{R. Mandel \hfill\break
Scuola Normale Superiore di Pisa \hfill\break
I- Pisa, Italy}
\email{Rainer.Mandel@sns.it}
\date{\today}

\subjclass[2000]{Primary: 35J47,\; Secondary: 35J50, 35Q55}
\keywords{}

\begin{abstract}
  We prove a conjecture which was recently formulated by Maia, Montefusco, Pellacci saying that minimal energy
  solutions of the saturated nonlinear Schr\"odinger system
  \begin{align*} 
    \begin{aligned}
    - \Delta u + \lambda_1 u &= \frac{\alpha u(\alpha u^2+\beta v^2)}{1+s(\alpha u^2+\beta v^2)} 
    \qquad\text{in }\R^n,\\
    - \Delta v + \lambda_2 v &= \frac{\beta v(\alpha u^2+\beta v^2)}{1+s(\alpha u^2+\beta v^2)}\qquad\text{in
    }\R^n,
    \end{aligned} 
  \end{align*}
  are necessarily semitrivial whenever $\alpha,\beta,\lambda_1,\lambda_2>0$ and
  $0<s<\max\{\frac{\alpha}{\lambda_1},\frac{\beta}{\lambda_2}\}$ except for the symmetric case
  $\lambda_1=\lambda_2,\alpha=\beta$. Moreover it is shown that for most parameter samples
  $\alpha,\beta,\lambda_1,\lambda_2$ there are infinitely many branches containing seminodal solutions which
  bifurcate from a semitrivial solution curve parametrized by $s$. 
\end{abstract}

\maketitle
\allowdisplaybreaks

\section{Introduction}
  
  In this paper we intend to continue the study on nonlinear Schr\"odinger systems for saturated optical
  materials which was recently initiated by Maia, Montefusco and Pellacci \cite{MMP_weakly_coupled}. In their
  paper the following system of elliptic partial differential equations
  \begin{align} \label{Gl equation}
    \begin{aligned}
    - \Delta u + \lambda_1 u &= \frac{\alpha u(\alpha u^2+\beta v^2)}{1+s(\alpha u^2+\beta v^2)} 
    \qquad\text{in }\R^n,\\
    - \Delta v + \lambda_2 v &= \frac{\beta v(\alpha u^2+\beta v^2)}{1+s(\alpha u^2+\beta v^2)} 
    \qquad\text{in }\R^n,
    \end{aligned} 
  \end{align}
  was suggested in order to model the interaction of two pulses within the optical material under
  investigation. Here, the parameters satisfy $\lambda_1,\lambda_2,\alpha,\beta,s>0$ and $n\in\N$. One way 
  to find classical fully nontrivial solutions of \eqref{Gl equation} is to use variational methods.
  The Euler functional $I_s:H^1(\R^n)\times H^1(\R^n)\to \R$ associated to the system \eqref{Gl equation} is
  given by
  \begin{align} \label{Gl Def I_s}
    \begin{aligned}
    I_s(u,v)
    &:=  \frac{1}{2}\Big(\|u\|_{\lambda_1}^2+\|v\|_{\lambda_2}^2
      -  \frac{\alpha}{s}\|u\|_2^2-\frac{\beta}{s}\|v\|_2^2\Big) 
      + \frac{1}{2s^2} \int_{\R^n} \ln(1+s(\alpha u^2+\beta z^2)) \\
    &=  \frac{1}{2}(\|u\|_{\lambda_1}^2+\|v\|_{\lambda_2}^2)
      - \frac{1}{2s^2} \int_{\R^n} g(sZ)
    \end{aligned} 
  \end{align}
  where $Z(x):= \alpha u(x)^2+\beta v(x)^2$ and $g(z):= z-\ln(1+z)$ for all $z\geq 0$. The symbol
  $\|\cdot\|_2$ denotes the standard norm on $L^2(\R^n)$ and the norms
  $\|\cdot\|_{\lambda_1},\|\cdot\|_{\lambda_2}$ are defined via 
  $$
    \|u\|_{\lambda_1} := \Big( \int_{\R^n} |\nabla u|^2+\lambda_1 u^2\Big)^{1/2},\qquad
    \|v\|_{\lambda_2} := \Big( \int_{\R^n} |\nabla v|^2+\lambda_2 v^2\Big)^{1/2}.
  $$ 
  
  \medskip 
  
  Since we are interested in minimal energy solutions (i.e. ground states) for \eqref{Gl equation} the ground
  states $u_s,v_s$ of the scalar problems associated to \eqref{Gl equation} turn out to be of particular
  importance. These are positive radially symmetric and radially decreasing smooth functions satisfying
  \begin{equation} \label{Gl scalar GS}
    -\Delta u_s + \lambda_1 u_s = \frac{\alpha^2 u_s^3}{1+s\alpha u_s^2}\quad\text{in }\R^n,\qquad
    -\Delta v_s + \lambda_2 v_s = \frac{\beta^2 v_s^3}{1+s\beta v_s^2}\quad\text{in }\R^n.
  \end{equation}
  Since we will encounter these solutions many times let us recall some facts from the literature. 
  Existence of positive finite energy solutions $u_s,v_s$ of \eqref{Gl scalar GS} for parameters
  $0<s<\frac{\alpha}{\lambda_1}$ respectively $0<s<\frac{\beta}{\lambda_2}$ can be deduced from Theorem~2.2 in
  \cite{StuZho_Applying_the_mountain} in case $n\geq 3$ or Theorem~1(i) in
  \cite{GaSeTa_Existence_of} for $n\geq 2$. In case $n=1$ the positive functions $u_s,v_s$ are given
  by $u_s(x)=u_s(-x),v_s(x)=v_s(-x)$ for all $x\in\R$ and 
  \begin{align*}
    u_s|_{[0,\infty)}^{-1}(z) &= \int_z^{u_s(0)} \Big(\frac{1}{\lambda_1 x^2-s ^{-2}g(s\alpha
    x^2)}\Big)^{1/2}\,dx, \qquad \text{for }z\in (0,u_s(0)], \\
    v_s|_{[0,\infty)}^{-1}(z) &= \int_z^{v_s(0)} \Big(\frac{1}{\lambda_2 x^2-s ^{-2}g(s\beta
    x^2)}\Big)^{1/2}\,dx \qquad\quad \text{for } z\in (0,v_s(0)]
  \end{align*}
  where $u_s(0),v_s(0)>0$ are uniquely determined by
  \begin{equation} \label{Gl us(0) n=1}
    \lambda_1 u_s(0)^2 - s^{-2}g(s\alpha u_s(0)^2)
    = \lambda_2 v_s(0)^2 - s^{-2}g(s\beta v_s(0)^2)
    = 0.
  \end{equation}
  As in the explicit one-dimensional case it is known also in the higher-dimensional case that
  $u_s,v_s$ are radially symmetric, see Theorem~2 in \cite{GiNiNi_Symmetry}. Finally, the uniqueness of
  $u_s,v_s$ follows from Theorem~1 in \cite{SerTan_Uniqueness} in case $n\geq 3$ and from Theorem~1 in
  \cite{McLSer_Uniqueness} in case $n=2$. The uniqueness result for $n=1$ is a direct consequence of the
  existence proof we gave above.
    
  \medskip
  
  In this paper we strengthen the results obtained by Maia, Montefusco, Pellacci \cite{MMP_weakly_coupled}
  concerning ground state solutions and (component-wise) positive solutions of \eqref{Gl equation}, so let us
  shortly comment on their achievements. In Theorem~3.7 of their paper they proved the existence of
  nonnegative radially symmetric and nonincreasing ground state solutions of \eqref{Gl equation} for all
  $n\geq 2$ and parameter values $0<s<\max\{\frac{\alpha}{\lambda_1},\frac{\beta}{\lambda_2}\}$ where the upper
  bound for $s$ is in fact optimal by Lemma~3.2 in the same paper. It was conjectured that each of these
  ground states is semitrivial except for the special case $\alpha=\beta,\lambda_1=\lambda_2$ where the
  totality of ground state solutions is known in a somehow explicit way, see Theorem~2.1 in \cite{MMP_weakly_coupled} or
  Theorem~\ref{Thm Ground states}~(i) below. In \cite{MMP_weakly_coupled} this conjecture was proved for
  parameters $s\geq \min\{\frac{\alpha}{\lambda_1},\frac{\beta}{\lambda_2}\}$, see Theorem~3.15 and
  Theorem~3.17. Our first result shows that the full conjecture is true even in the case $n=1$ which was left
  aside in \cite{MMP_weakly_coupled}.
    
  \begin{thm} \label{Thm Ground states}
    Let $n\in\N, \alpha,\beta,\lambda_1,\lambda_2>0$ and
    $0<s<\max\{\frac{\alpha}{\lambda_1},\frac{\beta}{\lambda_2}\}$. Then the following holds:
    \begin{itemize}
      \item[(i)] In case $\alpha=\beta$ and $\lambda_1=\lambda_2$ all ground states of \eqref{Gl
      equation} are given by $(\cos(\theta)u_s,\sin(\theta)v_s)$ for $\theta\in [0,2\pi)$.
      \item[(ii)] In case $\alpha\neq\beta$ or $\lambda_1\neq \lambda_2$ every ground state solution of
      \eqref{Gl equation} is semitrivial.
    \end{itemize}
  \end{thm}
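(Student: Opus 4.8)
The plan is to work with the minimization problem defining the ground state energy. Let $c_s$ denote the ground state level of $I_s$ on the Nehari-type set (or on the natural constraint manifold), and let $c_s^{(1)}, c_s^{(2)}$ denote the ground state levels of the two scalar problems in \eqref{Gl scalar GS}, attained by $u_s$ and $v_s$ respectively. The semitrivial pairs $(u_s,0)$ and $(0,v_s)$ are always solutions of \eqref{Gl equation}, so $c_s \le \min\{c_s^{(1)}, c_s^{(2)}\}$. The key claim to establish is the reverse inequality $c_s \ge \min\{c_s^{(1)}, c_s^{(2)}\}$, together with a rigidity statement: equality forces the minimizer to be semitrivial unless $\alpha=\beta$ and $\lambda_1=\lambda_2$.

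First I would exploit the structure of the nonlinearity. Since the right-hand sides of \eqref{Gl equation} depend on $u,v$ only through the single quantity $Z = \alpha u^2 + \beta v^2$, the functional $I_s$ is, in a suitable sense, a function of $Z$ alone (plus the quadratic gradient/mass terms). Given a fully nontrivial candidate $(u,v)$, the idea is to compare it with a semitrivial competitor built from a scalar function $w$ chosen so that $\alpha w^2$ (or $\beta w^2$) reproduces the profile $Z$, or at least so that the potential energy $\int g(sZ)$ is matched while the kinetic energy strictly decreases. Concretely, I expect to use the elementary convexity/monotonicity inequality
\begin{equation*}
  |\nabla(\sqrt{\alpha u^2+\beta v^2})|^2 \le \alpha|\nabla u|^2 + \beta|\nabla v|^2 \quad\text{pointwise, when }\alpha=\beta,
\end{equation*}
and its appropriate weighted analogue when $\alpha\neq\beta$ or $\lambda_1\neq\lambda_2$, to pass from $(u,v)$ to a scalar function with no larger energy and the same nonlinear term; rescaling this scalar function to land on the correct scalar Nehari manifold then shows $I_s(u,v) \ge \min\{c_s^{(1)}, c_s^{(2)}\}$. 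Tracking the equality cases in this chain of inequalities — which requires $\nabla u$ and $\nabla v$ to be pointwise parallel and the masses to be compatible — is exactly what yields $u = t v$ for a constant $t$, and then plugging $u = tv$ back into \eqref{Gl equation} forces $\alpha = \beta$, $\lambda_1 = \lambda_2$. In the degenerate symmetric case one recovers the full circle of minimizers $(\cos\theta\, u_s, \sin\theta\, v_s)$, giving (i); otherwise the minimizer cannot be fully nontrivial, giving (ii). For $n=1$ the explicit formulas for $u_s,v_s$ recorded above, together with \eqref{Gl us(0) n=1}, make the scalar comparison levels completely explicit, so the one-dimensional case not treated in \cite{MMP_weakly_coupled} follows from the same argument.

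The main obstacle I anticipate is the non-symmetric case $\alpha\neq\beta$ or $\lambda_1\neq\lambda_2$: there the naive rearrangement $w := \sqrt{\alpha u^2+\beta v^2}$ does not interact cleanly with the two different linear coefficients $\lambda_1,\lambda_2$, so the pointwise gradient inequality must be replaced by a sharper, parameter-dependent estimate, and one must argue that the strict inequality is genuinely strict (i.e. that a fully nontrivial minimizer would contradict $c_s = \min\{c_s^{(1)},c_s^{(2)}\}$) rather than merely non-strict. A secondary technical point is ensuring that the competitor obtained after the pointwise manipulation still lies in $H^1(\R^n)$ and on the relevant constraint set, which for $n=1$ and near the boundary value $s\to\max\{\alpha/\lambda_1,\beta/\lambda_2\}$ requires care because of the degeneracy visible in \eqref{Gl us(0) n=1}. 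Once these estimates are in place, the rigidity analysis of the equality case is essentially algebraic and should be routine.
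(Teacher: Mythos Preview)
Your overall strategy---show $c_s=\min\{c_s^{(1)},c_s^{(2)}\}$ and deduce rigidity from the equality case---is exactly the paper's plan, and your endgame (equality forces $u=tv$, whence the equations give $\alpha=\beta,\lambda_1=\lambda_2$) matches as well. The difference lies in the comparison step, and here your proposed tool runs into a genuine obstruction that the paper avoids.

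Your idea is to build a scalar competitor $w$ with $\alpha w^2=Z$ (or $\beta w^2=Z$) and use the pointwise Cauchy--Schwarz inequality $|\nabla\sqrt{Z}|^2\le\alpha|\nabla u|^2+\beta|\nabla v|^2$. The trouble is that after dividing by $\alpha$ this yields $\int|\nabla w|^2\le\int|\nabla u|^2+\tfrac{\beta}{\alpha}\int|\nabla v|^2$, and similarly the mass term becomes $\lambda_1\int w^2=\lambda_1\int u^2+\tfrac{\lambda_1\beta}{\alpha}\int v^2$. For the comparison $\|w\|_{\lambda_1}^2\le\|u\|_{\lambda_1}^2+\|v\|_{\lambda_2}^2$ to hold for \emph{all} test pairs one needs both $\beta\le\alpha$ and $\lambda_1\beta\le\lambda_2\alpha$; the analogous comparison with $(0,w)$ needs both reversed inequalities. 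There are parameter regimes (e.g.\ $\alpha<\beta$ with $\lambda_1\alpha>\lambda_2\beta$ close enough) where neither holds, so no ``weighted analogue'' of the gradient inequality will save the argument uniformly. The obstacle you anticipated is therefore not merely technical.

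The paper sidesteps this entirely by never comparing kinetic energies. It first rewrites the min--max level via the fibering map as $c_s=\inf_{(u,v)\neq 0}\sup_{r>0}I_s(\sqrt{r}u,\sqrt{r}v)$, then for fixed $(u,v)$ expresses $I_s(\sqrt{r}u,\sqrt{r}v)$ in the form $-\tfrac{\rho}{2}+\tfrac{1}{2s^2}\int\ln(1+\rho\cdot(\text{something}))$ for a suitable $\rho=\rho(r,u,v)$. The crucial algebraic observation is that the argument of $\ln$ splits as a convex combination $t\,A_u+(1-t)\,A_v$ of two \emph{scalar} expressions, with $t\in(0,1)$ depending only on the norms of $u,v$; concavity of $\ln$ then gives $\int\ln\ge t\int\ln A_u+(1-t)\int\ln A_v\ge\min\{\int\ln A_u,\int\ln A_v\}$, and taking $\sup_r$ produces $c_s^*$ on the right. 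Equality in the concavity step forces $A_u=kA_v$ pointwise, i.e.\ $u$ and $v$ are proportional, and the rest is your rigidity argument. This route treats all parameters uniformly and never touches the gradients separately, which is exactly what your approach cannot do.
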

  
  The proof of this result will we presented in section~\ref{sec Proof of Thm1}. Our approach is based on a
  suitable min-max characterization of the Mountain pass level associated to \eqref{Gl equation} involving a
  fibering map technique as in \cite{Man_Minimal_I}. This method even allows to give an alternative proof
  for the existence of a ground state solution of \eqref{Gl equation} which is significantly shorter than the
  one presented in \cite{MMP_weakly_coupled} and which moreover incorporates the case $n=1$, see
  Proposition \ref{Prop cs}. More importantly this approach yields the optimal result. 
  
  \medskip
  
  In view of Theorem \ref{Thm Ground states} it is natural to ask how the existence
  of fully nontrivial solutions of \eqref{Gl equation} can be proved. In \cite{MMP_weakly_coupled} Maia,
  Montefusco, Pellacci found necessary conditions and sufficient conditions for the existence of positive solutions
  of \eqref{Gl equation} which, however, partly contradict each other. For instance, Theorem~3.21 
  in~\cite{MMP_weakly_coupled} claims that positive solutions exist for parameters $\alpha=\beta,\lambda_1\neq
  \lambda_2$ and $s>0$ sufficiently small contradicting the nonexistence result from Theorem~3.10. The error
  leading to this contradiction is located on page 338, line~13 in~\cite{MMP_weakly_coupled} where the number
  $\frac{\lambda_2}{s}$ must be replaced by $\lambda_2 s$ which destroys the results from Theorem~3.19 and
  Theorem~3.21. Our approach to finding positive solutions and, more generally, seminodal solutions of
  \eqref{Gl equation} is to apply bifurcation theory to the semitrivial solution branches
  \begin{align*}
    \mathcal{T}_1 := \Big\{ (0,v_s,s) : 0<s<\frac{\beta}{\lambda_2} \Big\},\qquad
    \mathcal{T}_2 := \Big\{ (u_s,0,s) : 0<s<\frac{\alpha}{\lambda_1} \Big\}     
  \end{align*}
  which was motivated by the papers of Ostrovskaya, Kivshar \cite{OstKiv_Multi-hump} and Champneys, Yang
  \cite{ChaYan_A_scalar_nonlocal}. In the case $n=1$ and $\lambda_1=1,\lambda_2=\omega^2\in
  (0,1),\alpha=\beta=1$ they numerically detected a large number of solution branches emanating from
  $\mathcal{T}_2$ which consist of seminodal solutions. Moreover, they conjectured that the bifurcation points
  on $\mathcal{T}_2$ accumulate near $s=1$, see page~2184~ff. in \cite{ChaYan_A_scalar_nonlocal}. Our results
  confirm these observations. For simplicity we will only discuss the bifurcations from $\mathcal{T}_2$
  since the corresponding analysis for $\mathcal{T}_1$ is the same up to interchanging the roles of
  $\lambda_1,\lambda_2$ and $\alpha,\beta$. Investigating the linearized problems associated to \eqref{Gl
  equation} near $(u_s,0,s)$ for parameters close to the boundary of the parameter interval
  $(0,\frac{\alpha}{\lambda_1})$ we prove the existence of infinitely many bifurcating branches containing
  fully nontrivial solutions of a certain nodal pattern. Despite the fact that the question whether fully
  nontrivial solutions bifurcate from $\mathcal{T}_1,\mathcal{T}_2$ makes perfect sense for all space
  dimensions $n\in\N$ our bifurcation result is restricted to $n\in\{1,2,3\}$. Later we will comment on this issue in more
  detail, see Remark~\ref{Bem ngeq4}. In order to formulate our bifurcation result let us define the positive
  numbers $\bar\mu_k$ to be the $k$-th eigenvalues of the linear compact self-adjoint operators
  $\phi\mapsto (-\Delta+\lambda_2)^{-1}(\alpha\beta u_0^2\phi)$ mapping $H^1_r(\R^n)$ to itself where
  $u_0$ denotes the positive ground state solution of the first equation in \eqref{Gl scalar GS} for $s=0$. 
  By Sturm-Liouville theory we know that these eigenvalues are simple and that they satisfy 
  $$
    \bar\mu_0>\bar\mu_1>\bar\mu_2>\ldots>\bar\mu_k \to 0^+
    \quad\text{as }k\to\infty.
  $$
  Deferring some more or less standard notational convention to a later moment we come to the
  statement of our result.
  
  \begin{thm} \label{Thm Bifurcation}
    Let $n\in \{1,2,3\}$ and let $\alpha,\beta,\lambda_1,\lambda_2>0$ and $k_0\in\N_0$ satisfy
    $$
      \frac{\lambda_2}{\lambda_1}<\frac{\beta}{\alpha} \qquad\text{and}\qquad 
      \ov{\mu}_{k_0}<1.
    $$
    Then there is an increasing sequence $(s_k)_{k\geq k_0}$ of positive numbers converging to
    $\frac{\alpha}{\lambda_1}$ such that continua $\mathcal{C}_k\subset \mathcal{S}$ containing $(0,k)-$nodal
    solutions of \eqref{Gl equation} emanate from $\mathcal{T}_2$ at $s=s_k\,(k\geq k_0)$. In case
    $k_0=0$ we have $\lambda_1>\lambda_2$ and there is a $C>0$ such that all positive solutions
    $(u,v,s)\in\mathcal{C}_0$ with $s\geq 0$ satisfy
    \begin{equation} \label{Gl Thm bifurcation positive solutions}
      \|u\|_{\lambda_1}+\|v\|_{\lambda_2}<C\qquad\text{and}\qquad
      s<\frac{\alpha-\beta}{\lambda_1-\lambda_2}<\frac{\alpha}{\lambda_1}.
    \end{equation}    
  \end{thm}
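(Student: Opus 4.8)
The plan is to produce the branches $\mathcal C_k$ by applying global bifurcation theory to the curve $\mathcal T_2$, with the bifurcation parameters $s_k$ detected through an eigenvalue-crossing argument. Working in $X:=H^1_r(\R^n)^2$ I rewrite \eqref{Gl equation} as a fixed-point equation $(u,v)=K_s(u,v)$ with $K_s$ compact --- this is where subcriticality of the cubic growth for $n\le 3$ enters --- so that $\mathcal T_2$ consists of the fixed points $(u_s,0)$. Since the $v$-derivative of the right-hand side of the first equation in \eqref{Gl equation}, and the $u$-derivative of that of the second, both vanish along $\{v=0\}$, the linearisation $\Id-K_s'(u_s,0)$ is block-diagonal: the first block is invertible by the nondegeneracy of $u_s$ in the radial class (which accompanies the uniqueness results quoted in the introduction) and the second block is $\Id-L_s$ with $L_s\phi:=(-\Delta+\lambda_2)^{-1}\bigl(\tfrac{\alpha\beta u_s^2}{1+s\alpha u_s^2}\phi\bigr)$. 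Hence bifurcation from $\mathcal T_2$ at $s$ is possible only if $1\in\sigma(L_s)$. By Sturm--Liouville theory the eigenvalues $\mu_0(s)>\mu_1(s)>\dots\to0^+$ of $L_s$ are simple, the $k$-th eigenfunction $\phi_k$ has exactly $k$ zeros in $(0,\infty)$, and (since $s\mapsto u_s$ is real-analytic by the analytic implicit function theorem) each map $s\mapsto\mu_k(s)$ is real-analytic.

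\textbf{Locating the crossings.} Because $\tfrac{\alpha\beta u_0^2}{1+0}=\alpha\beta u_0^2$ one has $\mu_k(0)=\ov\mu_k$, so $\mu_k(0)<1$ for all $k\ge k_0$. The decisive point is the behaviour of $V_s:=\tfrac{\alpha\beta u_s^2}{1+s\alpha u_s^2}$ as $s\uparrow\tfrac\alpha{\lambda_1}$: I claim that for each fixed $z_0>0$ the superlevel sets $\{u_s\ge z_0\}$ --- balls $B_{R(s)}$ by radial monotonicity --- satisfy $R(s)\to\infty$. For $n=1$ this follows by estimating the quadrature formula for $u_s^{-1}$, using that the radicand $s^{-2}\ln(1+s\alpha x^2)-(\tfrac\alpha s-\lambda_1)x^2$ attains its maximum at a turning point whose square grows like $(\tfrac\alpha s-\lambda_1)^{-1}$ while its maximal value grows only logarithmically. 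For $n\in\{2,3\}$ it follows from $\|u_s\|_{H^1}\to\infty$: were $\|u_s\|_{H^1}$ bounded, radial compactness would produce a nontrivial finite-energy solution of $-\Delta u_*+\lambda_1 u_*=\tfrac{\alpha^2u_*^3}{1+(\alpha^2/\lambda_1)u_*^2}$, which is excluded by the Pohozaev identity since the corresponding potential stays below $\tfrac{\lambda_1}2t^2$ for all $t>0$; together with $u_s(0)\ge\sqrt{\lambda_1}/\alpha$ (from evaluating the equation at the maximum) and the uniform exponential decay of the tails this forces $u_s$ to spread. Choosing $z_0$ so large that $\tfrac{\alpha\beta z_0^2}{1+(\alpha/\lambda_1)\alpha z_0^2}>\lambda_2$ --- possible since this quantity increases to $\tfrac{\beta\lambda_1}\alpha>\lambda_2$ by the hypothesis $\tfrac{\lambda_2}{\lambda_1}<\tfrac\beta\alpha$ --- we get $V_s\ge\lambda_2+\eta$ on $B_{R(s)}$ for some $\eta>0$, and testing the quadratic form of $-\Delta+\lambda_2-V_s$ against the first $k+1$ radial Dirichlet eigenfunctions of $B_{R(s)}$ shows it is negative definite on a $(k+1)$-dimensional subspace once $R(s)$ is large, hence $\mu_k(s)>1$ near $\tfrac\alpha{\lambda_1}$. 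Combined with $\mu_k(0)<1$, continuity, analyticity, and the strict ordering $\mu_{k+1}<\mu_k$, this yields inductively an increasing sequence $(s_k)_{k\ge k_0}$ at which $\mu_k-1$ has a zero of odd order; moreover $s_k\to\tfrac\alpha{\lambda_1}$, since otherwise all $s_k$ would lie in a compact subinterval of $(0,\tfrac\alpha{\lambda_1})$ on which $\mu_k\to0$ uniformly as $k\to\infty$. At each $s_k$ the kernel of $\Id-K_{s_k}'(u_{s_k},0)$ is one-dimensional and the crossing number is odd, so Rabinowitz's global bifurcation theorem yields a continuum $\mathcal C_k\subset\mathcal S$ through $(u_{s_k},0,s_k)$; since near this point its elements have the form $(u_{s_k}+o(1),\,t\phi_k+o(t),\,s_k+o(1))$ with $t\to0$ and $\phi_k$ has $k$ simple zeros, these solutions are $(0,k)$-nodal.

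\textbf{The case $k_0=0$.} Here $\ov\mu_0<1$ says that $Q(\varphi):=\|\varphi\|_{\lambda_2}^2-\alpha\beta\int u_0^2\varphi^2$ is positive definite on $H^1_r(\R^n)$; evaluating at $\varphi=u_0$ and using the Nehari identity $\|u_0\|_{\lambda_1}^2=\alpha^2\int u_0^4$ gives $0<Q(u_0)=\alpha(\alpha-\beta)\int u_0^4-(\lambda_1-\lambda_2)\int u_0^2$. If $\lambda_1\le\lambda_2$, the standing hypothesis forces $\beta>\alpha$, and combining the Nehari and Pohozaev identities for $u_0$ (legitimate as $n<4$) yields $\alpha^2\int u_0^4=\tfrac{4\lambda_1}{4-n}\int u_0^2$; inserting this into $Q(u_0)>0$ gives $\tfrac{\lambda_2}{\lambda_1}>1+\tfrac4{4-n}\bigl(\tfrac\beta\alpha-1\bigr)\ge\tfrac\beta\alpha$, a contradiction. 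Hence $\lambda_1>\lambda_2$, and then $\tfrac{\lambda_2}{\lambda_1}<\tfrac\beta\alpha<1$ also gives $\alpha>\beta$, so $\sigma:=\tfrac{\alpha-\beta}{\lambda_1-\lambda_2}$ is positive and a one-line computation gives $\sigma<\tfrac\alpha{\lambda_1}$. For a componentwise positive solution $(u,v,s)$, testing the first equation with $v$ and the second with $u$ and subtracting yields $(\lambda_1-\lambda_2)\int uv=(\alpha-\beta)\int\tfrac{\alpha u^2+\beta v^2}{1+s(\alpha u^2+\beta v^2)}uv$; since $0<\tfrac{Z}{1+sZ}<\tfrac1s$ and $\int uv>0$, this forces $\tfrac{\lambda_1-\lambda_2}{\alpha-\beta}<\tfrac1s$, i.e.\ $s<\sigma<\tfrac\alpha{\lambda_1}$. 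The uniform bound $\|u\|_{\lambda_1}+\|v\|_{\lambda_2}<C$ on positive solutions in $\mathcal C_0$ then follows from the Pohozaev identity $\|\nabla u\|_2^2+\|\nabla v\|_2^2=nI_s(u,v)$, standard elliptic estimates, and a blow-up argument: a sequence of positive solutions with blowing-up norms would, after rescaling, converge to a nontrivial nonnegative solution of $-\Delta w=\gamma w$ with $\gamma\ge\tfrac\alpha\sigma-\lambda_1>0$, impossible by testing against the first Dirichlet eigenfunction of a large enough ball --- and $s<\sigma<\min\{\tfrac\alpha{\lambda_1},\tfrac\beta{\lambda_2}\}$ is exactly what keeps $\gamma$ away from $0$ and rules out the ground-state-type spreading.

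\textbf{Main obstacles.} The genuinely hard step is the asymptotic analysis of $u_s$ as $s\uparrow\tfrac\alpha{\lambda_1}$ and the resulting lower bounds $\mu_k(s)>1$ --- in particular making the spreading of $u_s$ quantitative in dimensions $n=2,3$, where no explicit formula is available. The second delicate point is the a priori norm bound for positive solutions on $\mathcal C_0$: turning the heuristic ``$s$ bounded away from $\tfrac\alpha{\lambda_1}$ prevents spreading'' into a rigorous blow-up/compactness argument requires care, and is exactly where the threshold $\sigma=\tfrac{\alpha-\beta}{\lambda_1-\lambda_2}$ is essential.
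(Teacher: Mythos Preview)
Your overall architecture matches the paper's: recast \eqref{Gl equation} as a compact fixed-point problem in the radial space, linearise along $\mathcal T_2$, observe the block-diagonal structure with second block $\Id-L_s$, detect eigenvalue crossings $\mu_k(s)=1$, and invoke Krasnoselski--Rabinowitz. The differences are in execution. First, the paper does not use analyticity of $s\mapsto u_s$; instead it defines $a_k:=\sup\{s:\mu_k(s)<1\}$ and picks $\un a_k<a_k<\ov a_k$ so that exactly $k$ eigenvalues exceed $1$ at $\un a_k$ and exactly $k+1$ at $\ov a_k$, giving an index change by parity with only continuity of $\mu_k$. Your analyticity route is legitimate but adds a technical layer you do not need. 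Second, for the behaviour near $s^*=\tfrac{\alpha}{\lambda_1}$ the paper proves more: $u_s\to+\infty$ locally uniformly (via $u_s(0)\to\infty$ and a Liouville argument on $u_s/u_s(0)$) and hence $W_s\to\tfrac{\beta\lambda_1}{\alpha}$, from which an explicit eigenvalue lemma yields $\mu_k(s)\to\tfrac{\beta\lambda_1}{\alpha\lambda_2}>1$. Your test-function route would also suffice, but your spreading argument is roundabout: evaluating the equation at the maximum already gives $u_s(0)^2\ge \lambda_1/(\alpha(\alpha-s\lambda_1))\to\infty$, which is the clean entry point. Third, your derivation of $\lambda_1>\lambda_2$ from $\ov\mu_0<1$ via Nehari and Poho\v{z}aev is a genuine addition; the paper asserts this but does not spell out the argument.

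The one real gap is the a priori bound for positive solutions. Your blow-up sketch (``rescaling gives a nontrivial nonnegative solution of $-\Delta w=\gamma w$ with $\gamma>0$'') only covers the regime where $\|(u,v)\|_\infty$ stays bounded while the $H^1$ norm blows up. It does not address the case $\|(u,v)\|_\infty\to\infty$, where a spatial rescaling is required and the limiting equation is not linear. The paper's Lemma~\ref{Lem apriori} handles both regimes separately: for $L^\infty$-blow-up it rescales and reduces, via a further case distinction on whether $s_kZ_k(0)$ stays bounded, to the Liouville-type inequality $-\Delta z\ge c z^3$ in $\R^n$, whose only nonnegative solution is trivial for $n\le 3$; then, with $L^\infty$ bounded, uniform exponential decay and the $H^1$ bound follow from a comparison argument. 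This is where the restriction $n\le 3$ enters in an essential way that your sketch does not capture.
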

  
  In case $n\in\{2,3\}$ we can estimate $\bar\mu^0$ from above in order to obtain a
  sufficient condition for the conclusions of Theorem~\ref{Thm Bifurcation} to hold for $k_0=0$. This estimate
  leading to Corollary~\ref{Cor 1} is based on the Courant-Fischer min-max-principle and H\"older's inequality.
  In the one-dimensional case the values of all eigenvalues $\bar\mu_k$ are explicitly known which results in
  Corollary~\ref{Cor 2}. 
  
  \begin{cor} \label{Cor 1}
    Let $n\in\{2,3\}$. Then the conclusions from Theorem~\ref{Thm Bifurcation} are true for $k_0=0$
    if
    \begin{equation} \label{Gl cond Cor1}
      \frac{\lambda_2}{\lambda_1}<\frac{\beta}{\alpha}<
      \Big(\frac{\lambda_2}{\lambda_1}\Big)^{\frac{4-n}{4}}.
    \end{equation}  
  \end{cor}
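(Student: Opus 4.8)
The plan is to obtain Corollary~\ref{Cor 1} from Theorem~\ref{Thm Bifurcation} applied with $k_0=0$. Theorem~\ref{Thm Bifurcation} has two hypotheses: $\lambda_2/\lambda_1<\beta/\alpha$, which is precisely the left inequality in \eqref{Gl cond Cor1}, and $\ov{\mu}_0<1$. The extra conclusion $\lambda_1>\lambda_2$ is automatic: for $n\in\{2,3\}$ the exponent $\tfrac{4-n}{4}$ lies in $(0,1)$, so \eqref{Gl cond Cor1} forces $\lambda_2/\lambda_1<(\lambda_2/\lambda_1)^{(4-n)/4}$, which can only hold if $\lambda_2/\lambda_1<1$. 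Hence everything reduces to the single estimate
\begin{equation} \label{Gl cor1 key}
  \ov{\mu}_0 \leq \frac{\beta}{\alpha}\Big(\frac{\lambda_2}{\lambda_1}\Big)^{-\frac{4-n}{4}},
\end{equation}
since \eqref{Gl cond Cor1} then yields $\ov{\mu}_0<1$.

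To prove \eqref{Gl cor1 key} I would start from the Courant--Fischer characterization of $\ov{\mu}_0$. Equipping $H^1_r(\R^n)$ with the inner product $\langle\phi,\psi\rangle_{\lambda_2}:=\int_{\R^n}\nabla\phi\cdot\nabla\psi+\lambda_2\phi\psi$, the operator $\phi\mapsto(-\Delta+\lambda_2)^{-1}(\alpha\beta u_0^2\phi)$ is self-adjoint and nonnegative because $\langle(-\Delta+\lambda_2)^{-1}(\alpha\beta u_0^2\phi),\psi\rangle_{\lambda_2}=\alpha\beta\int_{\R^n}u_0^2\phi\psi$, so its largest eigenvalue is
\[
  \ov{\mu}_0 = \sup_{0\neq\phi\in H^1_r(\R^n)} \frac{\alpha\beta\int_{\R^n}u_0^2\phi^2}{\|\phi\|_{\lambda_2}^2}.
\]
Next I would bound the numerator: first H\"older (in fact Cauchy--Schwarz) gives $\int_{\R^n}u_0^2\phi^2\leq\|u_0\|_4^2\|\phi\|_4^2$, and then the Gagliardo--Nirenberg inequality $\|\phi\|_4^2\leq C_n\|\nabla\phi\|_2^{n/2}\|\phi\|_2^{(4-n)/2}$ (valid for $n\leq3$) is used \emph{with its sharp constant}. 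The crucial input is the classical fact that the scalar ground state $u_0$ of $-\Delta u_0+\lambda_1 u_0=\alpha^2 u_0^3$ is, up to dilation, an extremal of this inequality, whence $C_n=\|u_0\|_4^2/(\|\nabla u_0\|_2^{n/2}\|u_0\|_2^{(4-n)/2})$. Finally an elementary one-variable maximization, $\sup_{x>0}x^{n/4}/(x+\lambda_2)=(\tfrac n4)^{n/4}(\tfrac{4-n}4)^{(4-n)/4}\lambda_2^{(n-4)/4}$, converts $\|\nabla\phi\|_2^{n/2}\|\phi\|_2^{(4-n)/2}$ into a constant multiple of $\|\phi\|_{\lambda_2}^2=\|\nabla\phi\|_2^2+\lambda_2\|\phi\|_2^2$. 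Assembling these steps and taking the supremum over $\phi$ gives
\[
  \ov{\mu}_0 \leq \alpha\beta\,\frac{\|u_0\|_4^4}{\|\nabla u_0\|_2^{n/2}\|u_0\|_2^{(4-n)/2}}\,\Big(\frac n4\Big)^{n/4}\Big(\frac{4-n}4\Big)^{(4-n)/4}\lambda_2^{(n-4)/4}.
\]

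What remains is to check that the right-hand side equals exactly $\tfrac{\beta}{\alpha}(\lambda_1/\lambda_2)^{(4-n)/4}$. Here I would insert the Nehari identity $\|\nabla u_0\|_2^2+\lambda_1\|u_0\|_2^2=\alpha^2\|u_0\|_4^4$ and the Pohozaev identity $\tfrac{n-2}{2}\|\nabla u_0\|_2^2+\tfrac{n\lambda_1}{2}\|u_0\|_2^2=\tfrac{n\alpha^2}{4}\|u_0\|_4^4$ for the scalar problem \eqref{Gl scalar GS} with $s=0$; solving this $2\times2$ linear system yields $\|\nabla u_0\|_2^2=\tfrac{n}{4-n}\lambda_1\|u_0\|_2^2$ and $\alpha^2\|u_0\|_4^4=\tfrac{4}{4-n}\lambda_1\|u_0\|_2^2$. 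Substituting these, every norm of $u_0$ and every $n$-dependent numerical factor cancels and one is left precisely with \eqref{Gl cor1 key}; together with \eqref{Gl cond Cor1} this finishes the proof.

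I expect the main obstacle to be the middle step: recognising that one must use the \emph{optimal} Gagliardo--Nirenberg constant — equivalently, that $u_0$ itself is the extremal function — since \eqref{Gl cor1 key} is sharp, equality being approached along dilations of $u_0$, so that a cruder Sobolev-type bound would produce a strictly worse exponent than the one appearing in \eqref{Gl cond Cor1}. Once this is in place, the bookkeeping with the Pohozaev and Nehari identities is routine and makes the constant collapse to the asserted value.
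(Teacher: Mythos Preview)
Your proof is correct and shares the paper's first two moves exactly: the Courant--Fischer formula $\bar\mu_0=\sup_{\phi\neq 0}\alpha\beta\|u_0\phi\|_2^2/\|\phi\|_{\lambda_2}^2$ followed by the H\"older/Cauchy--Schwarz bound $\|u_0\phi\|_2^2\le\|u_0\|_4^2\|\phi\|_4^2$. The divergence comes in how the remaining quantity $\alpha\beta\|u_0\|_4^2\cdot\sup_\phi\|\phi\|_4^2/\|\phi\|_{\lambda_2}^2$ is evaluated.

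The paper observes that this supremum is attained at the \emph{second} scalar ground state $v_0$ (the one for $\lambda_2,\beta$), and then introduces the normalized ground state $\zeta$ of $-\Delta\zeta+\zeta=\zeta^3$ to write $u_0(x)=\sqrt{\lambda_1}\alpha^{-1}\zeta(\sqrt{\lambda_1}x)$ and $v_0(x)=\sqrt{\lambda_2}\beta^{-1}\zeta(\sqrt{\lambda_2}x)$. With these scalings and the Nehari identity $\|v_0\|_{\lambda_2}^2=\beta^2\|v_0\|_4^4$, the whole expression collapses in one line to $\tfrac{\beta}{\alpha}(\lambda_1/\lambda_2)^{(4-n)/4}$, with no need for Poho\v{z}aev or any numerical optimization.

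Your route instead expresses the sharp constant via the scale-invariant Gagliardo--Nirenberg inequality (extremal $u_0$), then performs the one-variable maximization in $x=\|\nabla\phi\|_2^2/\|\phi\|_2^2$ to pass to the $\|\cdot\|_{\lambda_2}$-norm, and finally feeds in the Nehari and Poho\v{z}aev relations for $u_0$ to make the $n$-dependent factors cancel. This is a faithful unpacking of what the paper compresses into the single step ``evaluate at $v_0$ and scale'': the optimization over $x$ is precisely the optimization over dilations that selects, among all rescalings of $\zeta$, the one adapted to $\lambda_2$, i.e.\ $v_0$; and the Poho\v{z}aev identity for $u_0$ is exactly what encodes the scaling behaviour of the norms. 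So the two arguments are equivalent in content, but the paper's is considerably shorter because it never leaves the world of the two concrete functions $u_0,v_0$ and their common parent $\zeta$.
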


  \begin{cor} \label{Cor 2}
    Let $n=1$. Then the conclusions from Theorem~\ref{Thm Bifurcation} are true in case
    \begin{align} \label{Gl cond Cor2}
      \frac{\lambda_2}{\lambda_1}<\frac{\beta}{\alpha}<
      \frac{1}{2}\Big(\sqrt{\frac{\lambda_2}{\lambda_1}}+2k_0\Big)\Big(\sqrt{\frac{\lambda_2}{\lambda_1}}+2k_0+1\Big).
    \end{align}
  \end{cor}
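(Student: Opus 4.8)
The plan is to reduce the corollary to the single inequality $\bar\mu_{k_0}<1$, since the other hypothesis of Theorem~\ref{Thm Bifurcation}, namely $\tfrac{\lambda_2}{\lambda_1}<\tfrac{\beta}{\alpha}$, is already the first inequality in \eqref{Gl cond Cor2}. To this end I first compute the eigenvalues $\bar\mu_k$ explicitly for $n=1$. A direct calculation shows that $u_0(x)=\tfrac{\sqrt{2\lambda_1}}{\alpha}\sech(\sqrt{\lambda_1}\,x)$ solves the first equation in \eqref{Gl scalar GS} with $s=0$, i.e.\ $-u_0''+\lambda_1u_0=\alpha^2u_0^3$, and by uniqueness of the ground state it is $u_0$. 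Inserting this into the eigenvalue equation $(-\Delta+\lambda_2)^{-1}(\alpha\beta u_0^2\phi)=\mu\phi$, equivalently $-\phi''+\lambda_2\phi=\mu^{-1}\alpha\beta u_0^2\phi$, and rescaling $y:=\sqrt{\lambda_1}\,x$ turns it into the P\"oschl--Teller eigenvalue problem
\[
  -\phi_{yy}-\Lambda\sech^2(y)\,\phi=-\sigma^2\phi,\qquad
  \sigma:=\sqrt{\tfrac{\lambda_2}{\lambda_1}}>0,\quad \Lambda:=\tfrac{2\beta}{\alpha\mu}>0,
\]
so that $\mu=\bar\mu_k$ corresponds to the $k$-th value of $\Lambda$ that admits an eigenfunction in $H^1_r(\R)$, i.e.\ an \emph{even} one.

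The second step is to pin down these values of $\Lambda$. Substituting $t:=\tanh(y)\in(-1,1)$ transforms the ODE into the associated Legendre equation
\[
  (1-t^2)\psi''-2t\psi'+\Big(\nu(\nu+1)-\frac{\sigma^2}{1-t^2}\Big)\psi=0,\qquad \nu(\nu+1)=\Lambda,
\]
whose solutions yielding an $H^1(\R)$-function after undoing the substitution exist exactly for $\nu=\sigma+m$, $m\in\N_0$; for such $\nu$ one has $\phi(y)=\sech^{\sigma}(y)\,p_m(\tanh y)$ with $p_m$ a polynomial of degree $m$ (a Gegenbauer polynomial up to normalisation) having $m$ simple zeros in $(-1,1)$ and parity $(-1)^m$. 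This is the classical P\"oschl--Teller spectrum; since $\phi$ decays exponentially it indeed lies in $H^1(\R)$, and it lies in $H^1_r(\R)$ precisely when $m=2k$ is even. Hence the admissible values are $\Lambda=(\sigma+2k)(\sigma+2k+1)$, $k\in\N_0$, and since $\mu=\tfrac{2\beta}{\alpha\Lambda}$ is strictly decreasing in $\Lambda$ this gives, after matching with the ordering $\bar\mu_0>\bar\mu_1>\ldots$ recorded in the introduction,
\[
  \bar\mu_k=\frac{2\beta}{\alpha\,(\sigma+2k)(\sigma+2k+1)}
  =\frac{2\beta/\alpha}{\big(\sqrt{\lambda_2/\lambda_1}+2k\big)\big(\sqrt{\lambda_2/\lambda_1}+2k+1\big)}
  \qquad(k\in\N_0).
\]

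Finally, from this formula $\bar\mu_{k_0}<1$ is equivalent to $\tfrac{\beta}{\alpha}<\tfrac12\big(\sqrt{\lambda_2/\lambda_1}+2k_0\big)\big(\sqrt{\lambda_2/\lambda_1}+2k_0+1\big)$, which is the second inequality of \eqref{Gl cond Cor2}. Together with the first inequality there, both hypotheses of Theorem~\ref{Thm Bifurcation} are satisfied, and its conclusions hold. The one step that really needs care is the determination of the admissible $\Lambda$ — in particular the point that restricting to $H^1_r(\R)$ selects only the even P\"oschl--Teller eigenfunctions, which is exactly what produces the factor $2k_0$ (rather than $k_0$) in \eqref{Gl cond Cor2}; the remaining manipulations with explicit solutions and special functions are routine.
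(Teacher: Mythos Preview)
Your proof is correct and follows the same approach as the paper: identify $u_0$ explicitly, reduce the eigenvalue equation for $\bar\mu_k$ to the one-dimensional P\"oschl--Teller problem after the rescaling $y=\sqrt{\lambda_1}\,x$, and read off the formula $\bar\mu_k=\dfrac{2\beta/\alpha}{(\sqrt{\lambda_2/\lambda_1}+2k)(\sqrt{\lambda_2/\lambda_1}+2k+1)}$, from which the second inequality in \eqref{Gl cond Cor2} is exactly $\bar\mu_{k_0}<1$. The only difference is that the paper cites Lemma~5.1 in \cite{deFLop_Solitary_waves} for the spectrum of the $\sech^2$-potential, whereas you supply the derivation via the associated Legendre equation and the parity selection $m=2k$ yourself; both lead to the identical formula and conclusion.
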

  
  \begin{bem}     
    As we mentioned above one can find sufficient criteria for the existence of $(k,0)$-nodal solutions
    bifurcating from $\mathcal{T}_1$ by inversing the roles of $\lambda_1,\lambda_2$ and $\alpha,\beta$ in
    the statement of Theorem~\ref{Thm Bifurcation} as well as in its Corollaries.
  \end{bem}

  Theorem~\ref{Thm Bifurcation} gives rise to many questions which would be interesting to solve in the
  future. A list of open problems is provided in section~\ref{sec Open problems}. Before going on with the
  proof of our results let us clarify the notation which we used in Theorem~\ref{Thm Bifurcation}. The set
  $\mathcal{S}\subset X\times\R$ is the closure of all solutions of \eqref{Gl equation} which
  do not belong to $\mathcal{T}_2$ and a subset of $\mathcal{S}$ is called a
  continuum if it is a maximal connected set within $\mathcal{S}$. Finally, a fully nontrivial solution
  $(u,v)$ of \eqref{Gl equation} is called $(k,l)$-nodal if both component functions are radially symmetric
  and $u$ has precisely $k+1$ nodal annuli and $v$ has precisely $l+1$ nodal annuli. In other words, since
  double zeros can not occur, $(u,v)$ is $(k,l)$ nodal if the radial profiles of $u$ respectively $v$ have
  precisely $k$ respectively $l$ zeros.

  \section{Proof of Theorem \ref{Thm Ground states}} \label{sec Proof of Thm1}
  
  According to the assumptions of Theorem \ref{Thm Ground states} we will assume throughout this section that
  the numbers $\lambda_1,\lambda_2,\alpha,\beta$ are positive, that $s$ lies between $0$ and
  $\max\{\frac{\alpha}{\lambda_1},\frac{\beta}{\lambda_2}\}=:s^*$ and that the space dimension is
  an arbitrary natural number. Furthermore, we define the energy levels  
  \begin{align*}
    c_s &= \inf \Big\{ I_s(u,v) : (u,v)\in H^1(\R^n)\times H^1(\R^n) \text{ solves } \eqref{Gl equation},
    (u,v)\neq (0,0) \Big\}, \\
    c_s^* &= \inf \Big\{ I_s(u,v) : (u,v)\in H^1(\R^n)\times H^1(\R^n) \text{ solves } \eqref{Gl equation},
    u=0,v\neq 0 \text{ or }u\neq 0,v=0 \Big\}.
  \end{align*}  
  The first step towards the proof of Theorem \ref{Thm Ground states} is a more suitable
  min-max-characterization of the least energy level $c_s$ of \eqref{Gl equation} which, as in
  \cite{Man_Minimal_I}, gives rise to a simple proof for the existence of a ground state. To this end we
  introduce the Nehari manifold
  $$
    c_{\mathcal{N}_s} := \inf_{\mathcal{N}_s} I_s,\qquad
    \mathcal{N}_s := \{ (u,v)\in H^1(\R^n)\times H^1(\R^n): (u,v)\neq (0,0) \text{ and }I_s'(u,v)[(u,v)]=0\}.
  $$

  \begin{prop} \label{Prop cs} 
    The value   
    \begin{equation} \label{Gl cs-Char}
      c_s = c_{\mathcal N_s} = \inf_{(u,v)\neq (0,0)} \sup_{r>0} I_s(\sqrt{r}u,\sqrt{r}v).
    \end{equation}
    is attained at a radially symmetric and radially noninreasing ground state of \eqref{Gl equation}.
  \end{prop}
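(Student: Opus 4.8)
The plan is to exploit the scaling structure of $I_s$ via the fibering maps $\psi_{u,v}(r):=I_s(\sqrt r\,u,\sqrt r\,v)$, $r>0$, attached to each $(u,v)\neq(0,0)$, as in \cite{Man_Minimal_I}. With $Z:=\alpha u^2+\beta v^2$ and $g$ as in \eqref{Gl Def I_s} one computes
\begin{align*}
  \psi_{u,v}(r)&=\frac r2\Bigl(\|u\|_{\lambda_1}^2+\|v\|_{\lambda_2}^2-\frac1s\int_{\R^n}Z\Bigr)+\frac1{2s^2}\int_{\R^n}\ln(1+srZ),\\
  \psi_{u,v}'(r)&=\frac12\bigl(\|u\|_{\lambda_1}^2+\|v\|_{\lambda_2}^2\bigr)-\frac1{2s}\int_{\R^n}\frac{srZ^2}{1+srZ}.
\end{align*}
Since $r\mapsto\frac{srZ^2}{1+srZ}$ increases pointwise from $0$ to $Z\in L^1(\R^n)$, the derivative $\psi_{u,v}'$ is strictly decreasing, with $\psi_{u,v}'(0^+)=\tfrac12(\|u\|_{\lambda_1}^2+\|v\|_{\lambda_2}^2)>0$ and $\psi_{u,v}'(\infty)=\tfrac12(\|u\|_{\lambda_1}^2+\|v\|_{\lambda_2}^2-\tfrac1s\int_{\R^n}Z)$. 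Hence exactly one of two things happens: either $\psi_{u,v}'(\infty)\ge0$, so that $\psi_{u,v}$ is strictly increasing and $\psi_{u,v}(r)\to+\infty$ (the logarithmic term is $o(r)$, and if the leading bracket vanishes then $\int_{\R^n}\ln(1+srZ)\to+\infty$), giving $\sup_{r>0}\psi_{u,v}=+\infty$; or $(u,v)$ lies in the open cone
\begin{equation*}
  \mathcal A:=\Bigl\{(u,v)\neq(0,0):\ \|u\|_{\lambda_1}^2+\|v\|_{\lambda_2}^2<\tfrac1s\int_{\R^n}(\alpha u^2+\beta v^2)\Bigr\},
\end{equation*}
in which case $\psi_{u,v}$ has a unique critical point $r(u,v)>0$, its strict global maximum. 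From $\psi_{u,v}'(r)=\tfrac1{2r}I_s'(\sqrt r\,u,\sqrt r\,v)[(\sqrt r\,u,\sqrt r\,v)]$ a critical point of $\psi_{u,v}$ is the same thing as a point of $\mathcal N_s$ on the ray $\{(\lambda u,\lambda v):\lambda>0\}$, and since $\mathcal A$ is a positive cone this identifies $\mathcal N_s$ with $\{(\sqrt{r(u,v)}\,u,\sqrt{r(u,v)}\,v):(u,v)\in\mathcal A\}$ and yields, on passing to infima of maximal values, $c_{\mathcal N_s}=\inf_{(u,v)\neq(0,0)}\sup_{r>0}I_s(\sqrt r\,u,\sqrt r\,v)$. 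Every nontrivial solution of \eqref{Gl equation} lies on $\mathcal N_s$, so $c_s\ge c_{\mathcal N_s}$, and $\mathcal N_s\neq\emptyset$ since $s<s^*$ makes one of the scalar ground states of \eqref{Gl scalar GS} available, e.g.\ $(u_s,0)\in\mathcal N_s$ with $c_{\mathcal N_s}\le I_s(u_s,0)<\infty$. It then only remains to realize $c_{\mathcal N_s}$ by a genuine solution of \eqref{Gl equation}, for then $c_s=c_{\mathcal N_s}$.

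To this end I would take a minimizing sequence $(u_j,v_j)$ for $\inf_{(u,v)\neq(0,0)}\sup_{r>0}\psi_{u,v}$. As this value is invariant under $(u,v)\mapsto(\lambda u,\lambda v)$, $\lambda>0$, we may normalize $\|u_j\|_{\lambda_1}^2+\|v_j\|_{\lambda_2}^2=1$; the sequence then stays in $\mathcal A$ and is bounded in $H^1(\R^n)\times H^1(\R^n)$. Passing to the pair of Schwarz symmetrizations $(u_j^*,v_j^*)$ of $(|u_j|,|v_j|)$ yields $\sup_{r>0}\psi_{u_j^*,v_j^*}\le\sup_{r>0}\psi_{u_j,v_j}$: the quadratic part decreases by P\'olya--Szeg\H{o}, while $\int_{\R^n}g\bigl(sr(\alpha u^2+\beta v^2)\bigr)$ does not decrease for any $r>0$ because $G(a,b):=g(sr(\alpha a+\beta b))$ has $G(0,0)=0$, is nondecreasing in each variable, and is supermodular ($\partial_a\partial_b G=s^2r^2\alpha\beta\,g''(\cdots)\ge0$), so the two-function rearrangement inequality applies. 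Hence we may assume that $u_j,v_j\ge0$ are radially symmetric and radially nonincreasing; replacing finally $(u_j,v_j)$ by $(\sqrt{r(u_j,v_j)}\,u_j,\sqrt{r(u_j,v_j)}\,v_j)$ we obtain a sequence in $\mathcal N_s$ which is still nonnegative, radial and nonincreasing and satisfies $I_s(u_j,v_j)\to c_{\mathcal N_s}$.

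The hard part is the remaining compactness argument, and it is exactly here that $s<s^*$ is indispensable (it is optimal by Lemma~3.2 of \cite{MMP_weakly_coupled}). One must first show the $\mathcal N_s$-sequence bounded in $H^1\times H^1$, equivalently $\limsup_j r(u_j,v_j)<\infty$ for the normalized sequence above; this is not automatic for the saturated (asymptotically linear) nonlinearity, since on $\mathcal N_s$ one only has the relations $I_s(u,v)=\tfrac1{2s^2}\int_{\R^n}\bigl(\ln(1+sZ)-\tfrac{sZ}{1+sZ}\bigr)$ and $\|u\|_{\lambda_1}^2+\|v\|_{\lambda_2}^2=\int_{\R^n}\tfrac{Z^2}{1+sZ}$, the first controlling the second only through a logarithm. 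But $r(u_j,v_j)\to\infty$ would force $\int_{\R^n}(\alpha u_j^2+\beta v_j^2)\to s$, and feeding this into the explicit formula for $\psi_{u_j,v_j}(r(u_j,v_j))$ from the first paragraph forces either $c_{\mathcal N_s}\le0$ or an escape of mass; both are excluded by comparing $c_{\mathcal N_s}$ with the scalar ground state energies of \eqref{Gl scalar GS}, which are finite precisely because $s<s^*$. With boundedness secured, the sequence is a Palais--Smale sequence for $I_s$ on the natural constraint $\mathcal N_s$; vanishing is excluded by Lions' concentration--compactness lemma applied to $\alpha u_j^2+\beta v_j^2$ (whose $L^1$-mass is bounded below on $\mathcal N_s$ by mountain-pass geometry), and dichotomy is excluded again by the strict comparison of $c_{\mathcal N_s}$ with the scalar levels; so one gets strong convergence in $H^1\times H^1$ to a minimizer $(u,v)\in\mathcal N_s$. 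By the natural-constraint principle $(u,v)$ is a critical point of $I_s$, hence a solution of \eqref{Gl equation}, and since symmetrization never raises $I_s$ it may be taken radially symmetric and radially nonincreasing; then $c_s\le I_s(u,v)=c_{\mathcal N_s}\le c_s$, which proves the proposition. For $n\ge2$ this compactness step is cleaner if one works in $H^1_r(\R^n)$ from the start and uses the compact embedding into $L^p(\R^n)$ for subcritical exponents $p>2$; for $n=1$, where even functions no longer embed compactly, one additionally tracks the (at most two, reflection-symmetric) bumps that could run off to $\pm\infty$ and rules them out with $c_{\mathcal N_s}<2\,c_{\mathcal N_s}$, or falls back on the explicit one-dimensional description of \eqref{Gl scalar GS} recalled in the introduction.
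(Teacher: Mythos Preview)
Your overall strategy---fibering maps, symmetric decreasing rearrangement, minimization over the Nehari set, then verifying the minimizer is a free critical point---is exactly the paper's. The genuine difference, and the place where your argument has a gap, is the boundedness step for the minimizing sequence.

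You normalize $\|u_j\|_{\lambda_1}^2+\|v_j\|_{\lambda_2}^2=1$ and then have to show $r(u_j,v_j)$ stays bounded. As you yourself note, this is \emph{not} automatic for the saturated nonlinearity: on $\mathcal N_s$ one has $I_s=\tfrac{1}{2s^2}\int\bigl(\ln(1+s\tilde Z)-\tfrac{s\tilde Z}{1+s\tilde Z}\bigr)$ while $\|\tilde u\|_{\lambda_1}^2+\|\tilde v\|_{\lambda_2}^2=\int\tfrac{\tilde Z^2}{1+s\tilde Z}$, and the second integrand grows linearly in $\tilde Z$ whereas the first grows only logarithmically, so a bound on $I_s$ does not control the norm. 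Your sketch (``$r_j\to\infty$ would force $\int Z_j\to s$, then comparison with scalar levels\ldots'') does not close: from the Nehari relation one only gets $\int Z_j=s+\int\tfrac{Z_j}{1+sr_jZ_j}$, and the last integral need not tend to zero since the contribution from the region where $Z_j$ is small is not controlled by $r_j$ alone. The subsequent appeal to scalar energies is not spelled out either.

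The paper avoids all of this by a different normalization: it fixes $\tfrac{1}{2s^2}\int g(sZ_k)=1$. Then
\[
  c_s+o(1)\;\geq\; I_s(u_k,v_k)\;=\;\tfrac12\bigl(\|u_k\|_{\lambda_1}^2+\|v_k\|_{\lambda_2}^2\bigr)-1
\]
gives the $H^1$-bound in one line. With boundedness in hand the paper also bypasses concentration--compactness entirely: because the sequence is radially \emph{decreasing} and $H^1$-bounded, Strauss-type uniform decay plus dominated convergence give $\int g(rsZ_k)\to\int g(rsZ)$ for every $r>0$ and hence a nontrivial limit; this works uniformly for all $n\geq 1$, so no separate one-dimensional argument is needed. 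Your Lions dichotomy analysis and the ad hoc $n=1$ patch would work in principle, but they are unnecessary once the right normalization is chosen.
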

  \begin{proof}
    From the equations (3.15),(3.52) in \cite{MMP_weakly_coupled} we get $c_s=c_{\mathcal N_s}$, so let us
    prove the second equation in \eqref{Gl cs-Char}. For every fixed $u,v\in H^1(\R^n)$ satisfying $(u,v)\neq (0,0)$
    we set 
    $$
      \beta(r)
      := I_s(\sqrt{r}u,\sqrt{r}v)
      = \frac{r}{2}(\|u\|_{\lambda_1}^2+\|v\|_{\lambda_2}^2) - \frac{1}{2s^2}\int_{\R^n} g(rsZ)
    $$ 
    so that $(\sqrt{r}u,\sqrt{r}v)\in\mathcal{N}_s$ holds for $r>0$ if and only if $\beta'(r)=0$. Since
    $\beta$ is smooth and strictly concave with $\beta'(0)>0$ a critical point of $\beta$ is uniquely
    determined and it is a maximizer (whenever it exists). Since the supremum of $\beta$ is $+\infty$ when
    there is no maximizer of $\beta$ we obtain 
    $$
      c_{\mathcal N_s}
      = \inf_{\mathcal{N}_s} I_s
      = \inf_{(u,v)\neq (0,0)} \sup_{r>0} I_s(\sqrt{r}u,\sqrt{r}v)
    $$
    which proves the formula \eqref{Gl cs-Char}.
    
    \medskip  
    
    Due to $0<s<\max\{\frac{\alpha}{\lambda_1},\frac{\beta}{\lambda_2}\}$
    we can find a semitrivial function $(u,v)\in H^1(\R^n)\times H^1(\R^n)$ satisfying
    $\|u\|_{\lambda_1}^2+\|v\|_{\lambda_2}^2<\tfrac{\alpha}{s}\|u\|_2^2+\frac{\beta}{s}\|v\|_2^2$ which
    implies $c_s<\infty$ according to \eqref{Gl cs-Char}.
    So let $(u_k,v_k)$ be a minimizing sequence in $H^1(\R^n)\times H^1(\R^n)$  
    satisfying $\sup_{r>0} I_s(\sqrt{r}u_k,\sqrt{r}v_k)\to c_s$ as $k\to\infty$. Using the classical
    Polya-Szeg\"o-inequality and the extended Hardy-Littlewood inequality 
    $$
     \int_{\R^n} \ln \Big(1+ rs(\alpha u_k^2+\beta v_k^2)\Big)
     \geq  \int_{\R^n} \ln \Big( 1+rs(\alpha {u_k^*}^2+\beta {v_k^*}^2)\Big)
     \qquad\text{for all }r>0
    $$
    for the spherical rearrangement taken from Theorem 2.2 in  \cite{AlmLie_symmetric_decreasing} we may
    assume $u_k,v_k$ to be radially symmetric and radially decreasing. Since the function
    $g(z)=z-\ln(1+z)$ strictly increases on $(0,\infty)$ from 0 to $+\infty$ we may moreover assume that
    $(u_k,v_k)$ are rescaled in such a way that the equality $\frac{1}{2s^2}\int_{\R^n} g(sZ_k) = 1$
    holds for $Z_k:=\alpha u_k^2+\beta v_k^2$. The inequality 
    $$
      c_s + o(1)
      = \lim_{k\to\infty} \sup_{r>0} I_s(\sqrt{r}u_k,\sqrt{r}v_k)
      \geq \limsup_{k\to\infty} I_s(u_k,v_k)
      = \frac{1}{2} \limsup_{k\to\infty} (\|u_k\|_{\lambda_1}^2+\|v_k\|_{\lambda_2}^2) - 1
	$$ 
	implies that the sequence $(u_k,v_k)$ is bounded in $H^1(\R^n)\times H^1(\R^n)$. Using the uniform decay rate
	and the resulting compactness properties of radially decreasing functions bounded in
	$H^1(\R^n)\times H^1(\R^n)$ (apply for instance Compactness Lemma~2 in \cite{Str_existence_of_solitary})
	we may take a subsequence again denoted by $(u_k,v_k)$ such that $(u_k,v_k)\wto (u,v)$ in $H^1(\R^n)\times
	H^1(\R^n)$, pointwise always everywhere and 
	$$
	  \frac{1}{2s^2}\int_{\R^n} g(rsZ)
	  = \lim_{k\to\infty} \frac{1}{2s^2} \int_{\R^n} g(rsZ_k) \qquad\text{for all }r>0.
	$$
    From this we infer $\frac{1}{2s^2}\int_{\R^n} g(sZ)=1$ and thus $(u,v)\neq (0,0)$.
    Hence, we obtain
    \begin{align*}
      c_s 
      &= \lim_{k\to\infty} \sup_{\rho >0} I_s(\sqrt{\rho}u_k,\sqrt{\rho}v_k) \\
      &\geq  \limsup_{k\to\infty} \Big( \frac{r}{2} (\|u_k\|_{\lambda_1}^2+\|v_k\|_{\lambda_2}^2 ) 
      - \frac{1}{2s^2}\int_{\R^n} g(r sZ_k)\Big)\\
	  &\geq \frac{r}{2} (\|u\|_{\lambda_1}^2+\|v\|_{\lambda_2}^2) - \frac{1}{2s^2} \int_{\R^n} g(r sZ) \\
      &= I_s(\sqrt{r}u,\sqrt{r}v)
      \qquad \qquad \text{for all }r>0 
    \end{align*}
    so that $(u,v)$ is a nontrivial radially symmetric and radially decreasing minimizer.
    Taking for $r$ the maximizer of the map $r\mapsto I_s(\sqrt{r}u,\sqrt{r}v)$ we obtain the ground state
    solution $(\bar u,\bar v):=(\sqrt{r }u,\sqrt{r}v)$ having the properties we claimed to hold. Indeed,
    the Nehari manifold may be rewritten as $\mathcal{N}_s= \{ (u,v)\in H^1(\R^n)\times H^1(\R^n):
    (u,v)\neq (0,0), H(u,v)=0\}$ for  
    $$
      H(u,v)
      :=I_s'(u,v)[(u,v)]
	  = \|u\|_{\lambda_1}^2+\|v\|_{\lambda_2}^2 - \int_{\R^n}  \frac{Z^2}{1+sZ} 
    $$ 
    so that the Lagrange multiplier rule applies due to
    $$ 
      H'(u,v)[(u,v)]
      = 2(\|u\|_{\lambda_1}^2+\|v\|_{\lambda_2}^2) - \int_{\R^n} \frac{4Z^2+2sZ^3}{(1+sZ)^2}
      = - \int_{\R^n} \frac{2Z^2}{1+sZ}
      <0
    $$
    for all $(u,v)\in\mathcal{N}_s$.
  \end{proof}
  
  Let us note that $c_s$ equals $c=m_{\mathcal{N}}=m_{\mathcal{P}}$ from Lemma~3.6 
  in~\cite{MMP_weakly_coupled} and therefore corresponds to the Mountain pass level of $I_s$. 
  Given Proposition~\ref{Prop cs} we are in the position to prove Theorem~\ref{Thm Ground states}.

  \medskip
  
  {\it Proof of Theorem \ref{Thm Ground states}}:\; Part (i) was proved in
  \cite{MMP_weakly_coupled}, Lemma 3.2, so let us prove (ii). First we show that the ground state energy
  level $c_s$ equals $c_s^*$. Since we have $c_s\leq c_s^*$ by definition we
  have to show
    \begin{equation} \label{Gl Thm 1 I}
      \sup_{r>0} I_s(\sqrt{r}u,\sqrt{r}v)\geq c_s^* \qquad\text{for all }u,v\in H^1(\R^n) 
      \text{ with } u,v\neq 0.
    \end{equation}
    From \eqref{Gl Def I_s} we deduce the following: if $\|u\|_{\lambda_1}^2\geq
    \frac{\alpha}{s}\|u\|_2^2$ then we have $I_s(\sqrt{r}u,\sqrt{r}v)\geq I_s(0,\sqrt{r}v)$ for all $v\neq 0$
    and $r>0$ which implies the inequality \eqref{Gl Thm 1 I}. The same way one proves \eqref{Gl Thm 1 I} in case
    $\|v\|_{\lambda_2}^2\geq \frac{\beta}{s} \|v\|_2^2$ so that it remains to prove \eqref{Gl Thm 1 I} for
    functions $(u,v)$ satisfying 
    \begin{equation} \label{Gl Thm 1 II}
       \|u\|_{\lambda_1}^2<\frac{\alpha}{s}\|u\|_2^2\qquad\text{and}\qquad
       \|v\|_{\lambda_2}^2< \frac{\beta}{s} \|v\|_2^2.
    \end{equation}
    To this end let $r>0$ be arbitrary but fixed. From  \eqref{Gl Thm 1 II} we infer that the numbers    
    \begin{align*}
      t(u,v)
      &:=   \frac{\frac{\alpha}{s}\|u\|_2^2-\|u\|_{\lambda_1}^2}{
      \frac{\alpha}{s}\|u\|_2^2+\frac{\beta}{s}\|v\|_2^2 -\|u\|_{\lambda_1}^2 -\|v\|_{\lambda_2}^2},\\
      r(u,v) &:=
      r\cdot
      \Big(\frac{\alpha}{s}\|u\|_2^2+\frac{\beta}{s}\|v\|_2^2-\|u\|_{\lambda_1}^2-\|v\|_{\lambda_2}^2\Big)
    \end{align*}
    satisfy $t(u,v)\in (0,1),r(u,v)>0$ as well as 
    \begin{align}  \label{Gl Thm 1 III}
      \begin{aligned}
       I_s(\sqrt{r}u,\sqrt{r}v) 
       &=  -\frac{r(u,v)}{2} + \frac{1}{2s^2} \int_{\R^n} \ln\Big( 1+\frac{r(u,v) s(\alpha u^2+\beta v^2)}{
       \frac{\alpha}{s}\|u\|_2^2+\frac{\beta}{s} \|v\|_2^2-\|u\|_{\lambda_1}^2-\|v\|_{\lambda_2}^2} \Big).
      \end{aligned}
    \end{align}
     The concavity of $\ln$ yields
     \begin{align*}
        &\; \int_{\R^n} \ln\Big( 1+\frac{r(u,v) s(\alpha u^2+\beta v^2)}{
        \frac{\alpha}{s}\|u\|_2^2+\frac{\beta}{s} \|v\|_2^2-\|u\|_{\lambda_1}^2-\|v\|_{\lambda_2}^2} \Big)  \\
       &= \int_{\R^n} \ln\Big( t(u,v)
          \Big(1+\frac{r(u,v)s\alpha u^2}{\frac{\alpha}{s}\|u\|_2^2-\|u\|_{\lambda_1}^2}\Big)
        + (1-t(u,v))\Big(1+\frac{r(u,v)s\beta v^2}{\frac{\beta}{s}\|v\|_2^2-\|v\|_{\lambda_2}^2}
        \Big)\Big) \\
       &\geq t(u,v) \int_{\R^n} \ln\Big( 1+\frac{r(u,v)s\alpha
       u^2}{\frac{\alpha}{s}\|u\|_2^2-\|u\|_{\lambda_1}^2}\Big)
        + (1-t(u,v)) \int_{\R^n} \ln\Big(1+\frac{r(u,v)s\beta  
        v^2}{\frac{\beta}{s}\|v\|_2^2-\|v\|_{\lambda_2}^2} \Big)    \\
       &\geq \min\Big\{ \int_{\R^n} \ln\Big( 1+\frac{r(u,v) s\alpha
       u^2}{\frac{\alpha}{s}\|u\|_2^2-\|u\|_{\lambda_1}^2}\Big),\int_{\R^n} \ln\Big(1+\frac{r(u,v) s\beta  
        v^2}{\frac{\beta}{s}\|v\|_2^2-\|v\|_{\lambda_2}^2} \Big)   \Big\}.              
     \end{align*}
     Combining this inequality with \eqref{Gl Thm 1 III} gives
     \begin{align*}
       I_s(\sqrt{r}u,\sqrt{r}v) 
       &\geq \min\Big\{ - \frac{r(u,v)}{2}+\int_{\R^n} \ln\Big( 1+\frac{r(u,v) s\alpha
       u^2}{\frac{\alpha}{s}\|u\|_2^2-\|u\|_{\lambda_1}^2}\Big), \\
      &\qquad\qquad - \frac{r(u,v)}{2}+\int_{\R^n} \ln\Big(1+\frac{r(u,v) s\beta  
        v^2}{\frac{\beta}{s}\|v\|_2^2-\|v\|_{\lambda_2}^2} \Big)  
       \Big\}.
     \end{align*}
     Taking the supremum with respect to $r>0$ we get~\eqref{Gl Thm 1 I} and therefore $c_s\geq c_s^*$ which
     is what we had to show.
     
     \medskip
     
     It remains to prove that every ground state is semitrivial unless $\lambda_1=\lambda_2,\alpha=\beta$.
     To this end assume that $(u,v)$ is a fully nontrivial ground state solution of \eqref{Gl equation} so
     that in particular $I_s(u,v)=c_s$ holds. Then $c_s=c_s^*$ implies
     that the inequalities from above are equalities for some $r>0$. In particular, since $\ln$ is
     strictly concave and $t(u,v)\in (0,1)$ we get 
     $$
       1+\frac{r(u,v)s\alpha u^2}{\frac{\alpha}{s}\|u\|_2^2-\|u\|_{\lambda_1}^2} = k\cdot
       \Big(1+\frac{r(u,v)s\beta v^2}{\frac{\beta}{s}\|v\|_2^2-\|v\|_{\lambda_2}^2} \Big)
       \qquad\text{a.e. on } \R^n
     $$ 
     for some $k>0$. This implies $k=1$ so that $u,v$ have to be positive multiples of each
     other. From the Euler-Lagrange equation \eqref{Gl equation}  we deduce
     $\lambda_1=\lambda_2,\alpha=\beta$ which finishes the proof.  \qed
  
  \section{Proof of Theorem \ref{Thm Bifurcation}}   
   
  In this section we assume $\lambda_1,\lambda_2,\alpha,\beta>0$ as before but the space dimension $n$ is
  supposed to be $1,2$ or $3$. In Remark~\ref{Bem ngeq4} we will comment on the reason for this restriction.
  Let us first provide the functional analytical framework we will be working in. In case
  $n\geq 2$ we set $X:= H^1_r(\R^n)\times H^1_r(\R^n)$ to be the product of the radially symmetric functions
  in $H^1(\R^n)$ and define $F:X\times (0,\infty)\to X$ by
  \begin{align} \label{Gl Def F}
    F(u,v,s) &:=
    \vecII{u-(-\Delta+\lambda_1)^{-1}(\alpha u Z(1+sZ)^{-1})}{
    v - (-\Delta+\lambda_2)^{-1}(\beta vZ(1+sZ)^{-1})}  
    \quad\text{where }Z:=\alpha u^2+\beta v^2.      
  \end{align}
  Hence, finding solutions of \eqref{Gl equation} is equivalent to finding zeros of $F$.
  Using the compactness of the embeddings $H^1_r(\R^n)\to L^q(\R^n)$ for $n\geq 2$ and $2<q<\frac{2n}{n-2}$
  one can check that for all $s$ the function $F(\cdot,s)$ is a smooth compact perturbation of the identity
  in $X$ so that the Krasnoselski-Rabinowitz Global Bifurcation Theorem
  \cite{Kr_topological_methods},\cite{Rab_Some_global_results} is applicable. In case $n=1$, however, this
  structural property is not satisfied which motivates a different choice for $X$. In Appendix~A we
  show that one can define a suitable Hilbert space $X$ of exponentially decreasing functions such that
  $F(\cdot,s):X\to X$ is again a smooth compact perturbation of the identity in $X$. Besides this technical
  unconvenience the case $n=1$ can be treated in a similar way to the case $n\in\{2,3\}$ so that we only carry
  out the proofs only for the latter. Furthermore we always assume
  $\frac{\lambda_2}{\lambda_1}<\frac{\beta}{\alpha}$ according to the assumption of Theorem~\ref{Thm
  Bifurcation}.
  
  \medskip 
  
  The first step in our bifurcation analysis is to investigate the linearized problems associated to the
  equation $F(u,v,s)=0$ around the elements of the semitrivial solution branch $\mathcal{T}_2$.
  While doing this we make use of a nondegeneracy result for ground states of semilinear problems which is due
  to Bates and Shi \cite{BatShi_Existence_and_instability}. Amongst other things it tells us that $u_s$
  is a nondegenerate solution of the first equation in \eqref{Gl scalar GS}, i.e. we have the following
  result.
    
  \begin{prop} \label{Prop Nondegeneracy} 
    The linear problem
    \begin{align*}
      - \Delta \phi+ \lambda_1\phi = \frac{3\alpha^2 u_s^2+s\alpha^3 u_s^4}{(1+s\alpha u_s^2)^2} \phi,
      \qquad  \phi\in H^1_r(\R^n),\quad 0< s<\frac{\alpha}{\lambda_1}
    \end{align*}
    only admits the trivial solution $\phi=0$. 
  \end{prop}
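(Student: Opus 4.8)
The plan is to observe that the stated linear problem is nothing but the linearisation of the scalar ground state equation, restricted to radial functions, and then to deduce its triviality from the nondegeneracy theorem of Bates and Shi \cite{BatShi_Existence_and_instability}. Writing $g(t):=\frac{\alpha^2 t^3}{1+s\alpha t^2}$, the first equation in \eqref{Gl scalar GS} reads $-\Delta u_s+\lambda_1 u_s=g(u_s)$, and a direct differentiation gives $g'(t)=\frac{3\alpha^2 t^2+s\alpha^3 t^4}{(1+s\alpha t^2)^2}$, so that the operator appearing in the proposition is exactly $L:=-\Delta+\lambda_1-g'(u_s)$ acting on $H^1_r(\R^n)$. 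Hence the claim is equivalent to the radial nondegeneracy of the ground state $u_s$, i.e.\ to $\ker L\cap H^1_r(\R^n)=\{0\}$.

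The second step is to verify that $g$ meets the structural hypotheses under which \cite{BatShi_Existence_and_instability} establishes nondegeneracy. One checks that $g\in C^\infty$, that $g(0)=g'(0)=0$ (so that $0$ is nonresonant, $g'(0)<\lambda_1$), that $g(t)>0$ for $t>0$, that $g$ has subcritical --- indeed saturated --- growth, and, most importantly, that $t\mapsto g(t)/t=\frac{\alpha^2 t^2}{1+s\alpha t^2}$ is strictly increasing on $(0,\infty)$, with $\frac{d}{dt}\big(g(t)/t\big)=\frac{2\alpha^2 t}{(1+s\alpha t^2)^2}>0$ and $g(t)/t\to\alpha/s>\lambda_1$ as $t\to\infty$, the latter inequality being exactly the hypothesis $s<\alpha/\lambda_1$. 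These are precisely the monotone-quotient and growth conditions of the Bates--Shi framework (and are consistent with the uniqueness of $u_s$ recalled in the introduction). Their result then yields $\ker L\cap H^1_r(\R^n)=\{0\}$, which is the assertion.

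For orientation it is worth recalling the mechanism behind that theorem, since it pinpoints where the real work lies. Any $\phi\in H^1_r(\R^n)$ with $L\phi=0$ is, by elementary ODE theory, a classical solution which is regular at the origin and decays exponentially, because $g'(u_s)\to 0$ at infinity while $\lambda_1>0$; in particular $\phi\not\equiv 0$ forces $\phi(0)\neq 0$. Testing $L$ against $u_s$ and using the equation one obtains $\langle Lu_s,u_s\rangle=\int_{\R^n}u_s\big(g(u_s)-g'(u_s)u_s\big)=-\int_{\R^n}u_s^3\,\big(g(\cdot)/\cdot\big)'(u_s)<0$, so the principal eigenvalue of $L$ in the radial class is negative and a nontrivial element of $\ker L$ must change sign. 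The monotonicity of $g(t)/t$ is then exactly what drives a Sturm-type comparison / zero-counting argument involving $u_s$ and $\partial_r u_s$ to a contradiction. This last ODE comparison is the technically delicate point; it is the content of the theorem I would invoke rather than something I would reprove, so that the actual task reduces to the routine verification of the preceding paragraph.
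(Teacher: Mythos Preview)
Your approach is essentially the same as the paper's: identify the linear problem as the radial linearisation of the scalar ground state equation and invoke the nondegeneracy result of Bates and Shi \cite{BatShi_Existence_and_instability}. The only difference is cosmetic---the paper absorbs $-\lambda_1 z$ into the nonlinearity and verifies the specific hypotheses (g1)--(g5A) of class (A) in \cite{BatShi_Existence_and_instability} via the function $K_g(z)=zg'(z)/g(z)$, whereas you check the monotonicity of $g(t)/t$; your conditions are closely related but you should double-check that they match exactly the technical hypotheses of the theorem you cite, since Bates--Shi's framework is phrased in terms of $K_g$ rather than $g(t)/t$.
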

  \begin{proof} 
    In order to apply Theorem 5.4 (6) from \cite{BatShi_Existence_and_instability} we set 
    $$
      g(z):= - \lambda_1 z + \frac{\alpha^2 z^3}{1+s\alpha z^2}
      \qquad (z\in\R)
    $$ 
    so that $u_s$ is the ground state solution of $-\Delta u = g(u)$ in $\R^n$ which is centered at the
    origin. In the notation of \cite{BatShi_Existence_and_instability} one can check that $g$ is of class (A). Indeed, the
    properties (g1),(g2),(g3A),(g4A),(g5A) from page 258 in \cite{BatShi_Existence_and_instability} are satisfied for
    $b=(\frac{\lambda_1}{\alpha^2-\alpha\lambda_1 s})^{1/2},K_\infty=1$ and the unique positive number
    $\theta>b$ satisfying $(\frac{\alpha}{s}-\lambda_1)\theta^2-\frac{1}{s^2}\ln(1+s\alpha \theta^2)=0$.
    Notice that (g4A),(g5A) follow from the fact that $K_g(z):= zg'(z)/g(z)$
    decreases from $1$ to $-\infty$ on the interval $(0,b)$ and that it decreases from $+\infty$ to
    $K_\infty=1$ on $(b,\infty)$. Having checked the assumptions of Theorem 5.4 (6) we obtain that the space
    of solutions of $-\Delta \phi-g'(u_s)\phi=0$ in $\R^n$ is spanned by $\partial_1 u_s,\ldots,\partial_n
    u_s$ implying that the linear problem only has the trivial solution in $H^1_r(\R^n)$. Due to 
    \begin{equation} \label{Gl g'(us)}
      g'(u_s)= - \lambda_1 + \frac{3\alpha^2 u_s^2+s\alpha^3 u_s^4}{(1+s\alpha u_s^2)^2} 
    \end{equation}
    this proves the claim.
  \end{proof}
    
  Using this preliminary result we can characterize all possible bifurcation points on $\mathcal{T}_2$ which
  are, due to the Implicit Function Theorem, the points where the kernel of the linearized operators are
  nontrivial. For notational purposes we introduce the linear compact self-adjoint operator
  $L(s):H^1_r(\R^n)\to H^1_r(\R^n)$ for parameters $0<s<\frac{\alpha}{\lambda_1}$  by setting
  \begin{align*}    
    L(s)\phi:= (-\Delta+\lambda_2)^{-1}(W_s\phi),\qquad 
    &&\hspace{-1cm}W_s(x) := \frac{\alpha\beta u_s(x)^2}{1+s\alpha u_s(x)^2},
    &&(0<s<\frac{\alpha}{\lambda_1}) 
  \end{align*}
  for $\phi\in H^1_r(\R^n)$. Denoting by $(\mu_k(s))_{k\in\N_0}$ the
  decreasing null sequence of eigenvalues of $L(s)$ we will observe that finding bifurcation points on
  $\mathcal{T}_2$ amounts to solving $\mu_k(s)=1$ for $s\in (0,\tfrac{\alpha}{\lambda_1})$ and $k\in\N_0$. In
  fact we have the following.
  
  \begin{prop} \label{Prop kernels}
    We have  
    \begin{align*}
     \ker( \partial_{X} F(u_s,0,s)) 
     = \{0\}\times \ker(\Id-L(s))
     \qquad\text{for }0< s<\frac{\alpha}{\lambda_1}.      
    \end{align*}
  \end{prop}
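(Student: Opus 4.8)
The plan is to compute $\partial_X F(u_s,0,s)$ directly from the definition of $F$ and show that the off-diagonal terms vanish, so that the linearized operator is block-triangular (in fact block-diagonal). First I would recall that $F(u,v,s)$ has components $F_1 = u - (-\Delta+\lambda_1)^{-1}(\alpha u Z(1+sZ)^{-1})$ and $F_2 = v - (-\Delta+\lambda_2)^{-1}(\beta v Z(1+sZ)^{-1})$ with $Z = \alpha u^2 + \beta v^2$. Differentiating the nonlinear term $N(u,v):= uZ(1+sZ)^{-1}$ with respect to $(u,v)$ and then evaluating at $(u_s,0)$, one uses that every term in $\partial_v$ of the first component and every term in $\partial_u$ of the second component carries a factor $v$, which is zero at $(u_s,0)$. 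Concretely, at $v=0$ we have $Z = \alpha u_s^2$, and a short computation gives
\begin{align*}
  \partial_X F(u_s,0,s)\vecII{\phi}{\psi}
  = \vecII{\phi - (-\Delta+\lambda_1)^{-1}\big(\tfrac{3\alpha^2 u_s^2 + s\alpha^3 u_s^4}{(1+s\alpha u_s^2)^2}\,\phi\big)}{
           \psi - (-\Delta+\lambda_2)^{-1}\big(\tfrac{\alpha\beta u_s^2}{1+s\alpha u_s^2}\,\psi\big)}.
\end{align*}
The second component is exactly $\psi - L(s)\psi$ by the definition of $L(s)$ and $W_s$.

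Given this block-diagonal form, the kernel splits as $\ker(\partial_X F(u_s,0,s)) = K_1 \times K_2$ where $K_1$ is the kernel of the first-component operator on $H^1_r(\R^n)$ and $K_2 = \ker(\Id - L(s))$. It then remains to show $K_1 = \{0\}$. But the first-component operator is precisely $\phi \mapsto \phi - (-\Delta+\lambda_1)^{-1}(g'(u_s)\phi)$ with $g'(u_s)$ as in \eqref{Gl g'(us)}, so $\phi \in K_1$ means $-\Delta\phi + \lambda_1\phi = g'(u_s)\phi$ in the weak sense, i.e. $\phi$ solves the linear problem of Proposition~\ref{Prop Nondegeneracy}. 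By that proposition $\phi = 0$, hence $K_1 = \{0\}$ and the claimed identity follows.

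The only mildly delicate point is the bookkeeping in differentiating $N(u,v) = uZ(1+sZ)^{-1}$: one must verify that the quotient rule together with $\partial_u Z = 2\alpha u$, $\partial_v Z = 2\beta v$ indeed produces, after setting $v=0$, exactly the coefficient $\tfrac{3\alpha^2 u_s^2 + s\alpha^3 u_s^4}{(1+s\alpha u_s^2)^2}$ in the $\phi$-direction of the first slot and exactly $W_s = \tfrac{\alpha\beta u_s^2}{1+s\alpha u_s^2}$ in the $\psi$-direction of the second slot, with all mixed derivatives vanishing because they are proportional to $v$. One should also note that $(-\Delta+\lambda_i)^{-1}$ maps into $H^1_r(\R^n)$ and that multiplication by the bounded radial potentials $g'(u_s)$, $W_s$ preserves the relevant mapping properties, so that $\partial_X F(u_s,0,s)$ is a well-defined compact perturbation of the identity and the above pointwise identities are genuine identities in $X$; this is routine given the functional-analytic setup already established for $F$. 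No deeper obstacle is expected — the statement is essentially the assertion that the linearization decouples, combined with the Bates--Shi nondegeneracy already secured in Proposition~\ref{Prop Nondegeneracy}.
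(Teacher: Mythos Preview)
Your proposal is correct and follows essentially the same route as the paper: compute the Fr\'echet derivative, observe that the cross terms vanish at $(u_s,0)$ so the linearization is block-diagonal with the stated coefficients, and then invoke Proposition~\ref{Prop Nondegeneracy} to kill the first block. One small slip: the coefficient $\tfrac{3\alpha^2 u_s^2+s\alpha^3 u_s^4}{(1+s\alpha u_s^2)^2}$ equals $g'(u_s)+\lambda_1$, not $g'(u_s)$ (cf.~\eqref{Gl g'(us)}), so the first-component operator is not literally $\phi\mapsto\phi-(-\Delta+\lambda_1)^{-1}(g'(u_s)\phi)$ --- but since you correctly match the kernel equation with the statement of Proposition~\ref{Prop Nondegeneracy}, this does not affect the argument.
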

  \begin{proof}
    For $(u,v),(\phi_1,\phi_2)\in X$ we have
    \begin{align*}
      \partial_XF_1(u,v,s)[\phi_1,\phi_2]
      &= \phi_1 -
      (-\Delta+\lambda_1)^{-1}\Big( \frac{s\alpha Z^2+3\alpha^2u^2+\alpha\beta
      v^2}{(1+sZ)^2}\phi_1+ \frac{2\alpha\beta uv}{(1+sZ)^2}\phi_2\Big) \\
      \partial_XF_2(u,v,s)[\phi_1,\phi_2]
      &= \phi_2 -
      (-\Delta+\lambda_2)^{-1}\Big( \frac{s\beta Z^2+3\beta^2v^2+\alpha\beta
      u^2}{(1+sZ)^2}\phi_2+ \frac{2\alpha\beta uv}{(1+sZ)^2}\phi_1\Big)
    \end{align*}
    Plugging in $u=u_s,v=0$ and $Z=\alpha u^2+\beta v^2=\alpha u_s^2$ gives
    \begin{align*}
      \partial_XF_1(u_s,0,s)[\phi_1,\phi_2]
      &= \phi_1 - (-\Delta+\lambda_1)^{-1}\Big( \frac{3\alpha^2
      u_s^2+s\alpha^3 u_s^4 }{(1+s\alpha u_s^2)^2}\phi_1\Big),   \\
      \partial_XF_2(u_s,0,s)[\phi_1,\phi_2]
      &= \phi_2 - (-\Delta+\lambda_2)^{-1}\Big( \frac{s\beta\alpha^2 u_s^4 +\alpha\beta
      u_s^2}{(1+s\alpha u_s^2)^2}\phi_2 \Big) \\
      &= \phi_2 - (-\Delta+\lambda_2)^{-1}\Big( \frac{\alpha\beta u_s^2}{1+s\alpha u_s^2}\phi_2 \Big) \\
      &= \phi_2 - (-\Delta+\lambda_2)^{-1}(W_s\phi_2) \\
      &= \phi_2 - L(s)\phi_2.
    \end{align*}
    From these formulas and Proposition \ref{Prop Nondegeneracy} we deduce the claim.
  \end{proof}
  
  Given this result our aim is to find sufficient conditions for the equation $\mu_k(s)=1$
  to be solvable. Since there is only few information available for any given $s>0$ our approach consists of 
  proving the continuity of $\mu_k$ and calculating the limits of $\mu_k(s)$ as
  $s$ approaches the boundary of $(0,\frac{\alpha}{\lambda_1})$. It will turn out that the limits at both
  sides of the interval exist and that they lie on opposite sides of the value~$1$ provided our sufficient
  conditions from Theorem~\ref{Thm Bifurcation} are satisfied. As a consequence these conditions and the
  Intermediate Value Theorem imply the solvability of $\mu_k(s)=1$ and it remains to add some technical
  arguments in order to apply the Krasnoselski-Rabinowitz Global Bifurcation Theorem to prove
  Theorem~\ref{Thm Bifurcation}. Calculating the limits of $\mu_k$ at the ends of
  $(0,\frac{\alpha}{\lambda_1})$ requires the Propositions~\ref{Prop us s to 0} and~\ref{Prop us,vs explode}.
  
  \begin{prop} \label{Prop us s to 0}
    We have   
    \begin{align*}
      u_s\to u_0,\qquad W_s\to \alpha\beta u_0^2 \qquad\text{as } s\to   0
    \end{align*}    
    where the convergence is uniform on $\R^n$.
  \end{prop}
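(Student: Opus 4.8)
The plan is to establish the convergence $u_s \to u_0$ by a combination of elliptic \emph{a priori} bounds, compactness for radial functions, and the uniqueness/nondegeneracy of the limit ground state, and then to deduce the convergence $W_s \to \alpha\beta u_0^2$ as an immediate corollary, upgrading both to uniform convergence on $\R^n$ via the standard uniform decay estimate for positive radially decreasing $H^1_r$-functions (as in Compactness Lemma~2 of \cite{Str_existence_of_solitary}). First I would fix a sequence $s_j \to 0^+$ and consider $u_{s_j}$, the unique positive radial ground state of the first equation in \eqref{Gl scalar GS} with $s = s_j$. Testing that equation against $u_{s_j}$ and using $\frac{\alpha^2 x^2}{1+s\alpha x^2}\le \alpha^2 x^2$ shows $\|u_{s_j}\|_{\lambda_1}^2 \le \alpha^2\|u_{s_j}\|_4^4 \le C\|u_{s_j}\|_{\lambda_1}^4$ via the Gagliardo--Nirenberg/Sobolev embedding (valid for $n\le 3$, in fact for $n\le 4$), whence $\|u_{s_j}\|_{\lambda_1}\ge c_0 > 0$ uniformly in $j$. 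For the upper bound I would compare energy levels: the mountain-pass/Nehari level of the scalar problem at $s_j$ is bounded above by that at $s=0$ plus $o(1)$ (the nonlinearity $\frac{\alpha^2x^2}{1+s\alpha x^2}$ increases to $\alpha^2 x^2$ monotonically as $s\downarrow 0$, so the levels are monotone), which forces $\|u_{s_j}\|_{\lambda_1}\le C$ uniformly. Thus $(u_{s_j})$ is bounded in $H^1_r(\R^n)$.

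Next I would extract a subsequence with $u_{s_j}\rightharpoonup u_*$ in $H^1_r(\R^n)$ and, by the compact embedding $H^1_r(\R^n)\hookrightarrow L^q(\R^n)$ for $2<q<\frac{2n}{n-2}$ (for $n\ge 2$; for $n=1$ one uses the exponential-decay space of Appendix~A, equivalently the compact embedding there), $u_{s_j}\to u_*$ strongly in $L^4$ and pointwise a.e. Passing to the limit in the weak formulation, the right-hand side $\frac{\alpha^2 u_{s_j}^3}{1+s_j\alpha u_{s_j}^2}$ converges to $\alpha^2 u_*^3$ in $L^{4/3}$ by dominated convergence (the integrand is dominated by $\alpha^2 u_{s_j}^3$, which converges in $L^{4/3}$), so $u_*$ solves $-\Delta u_* + \lambda_1 u_* = \alpha^2 u_*^3$. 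The lower bound $\|u_*\|_{\lambda_1}\ge c_0>0$ from strong $L^4$-convergence rules out $u_*\equiv 0$; moreover $u_*\ge 0$ is radial and radially nonincreasing as a limit of such functions, and by the strong maximum principle $u_*>0$. By the uniqueness of the positive ground state of the limiting equation (cited in the excerpt: \cite{SerTan_Uniqueness} for $n\ge 3$, \cite{McLSer_Uniqueness} for $n=2$, and the explicit ODE argument for $n=1$) we conclude $u_* = u_0$. Since the limit is independent of the subsequence, $u_s\to u_0$ in $H^1_r(\R^n)$ along the full family $s\to 0$.

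To promote this to uniform convergence on $\R^n$, I would invoke the uniform pointwise decay bound $0\le u_s(x)\le C|x|^{-(n-1)/2}\|u_s\|_{\lambda_1}$ valid for radially decreasing $H^1_r$-functions, together with elliptic regularity (the right-hand sides are uniformly bounded in every $L^p$, so $u_s$ is bounded in $C^{1,\gamma}_{loc}$ uniformly in $s$): strong $H^1$-convergence plus equicontinuity on compacta gives local uniform convergence, and the uniform decay tail control then upgrades this to uniform convergence on all of $\R^n$. Finally, writing $W_s = \frac{\alpha\beta u_s^2}{1+s\alpha u_s^2}$, the uniform bounds on $u_s$ and $u_s\to u_0$ uniformly give $W_s\to \alpha\beta u_0^2$ uniformly, since $\big|\frac{\alpha\beta u_s^2}{1+s\alpha u_s^2} - \alpha\beta u_0^2\big| \le \alpha\beta|u_s^2 - u_0^2| + s\alpha^2\beta u_s^4$ and both terms tend to zero uniformly. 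The main obstacle I anticipate is the uniform (in $s$) upper bound on $\|u_s\|_{\lambda_1}$ — i.e.\ ruling out that mass escapes or the norm blows up as $s\to 0$; the monotonicity of the nonlinearities in $s$ and the corresponding monotonicity of the ground-state energy levels is the cleanest way to close this, but one must be a little careful that the energy comparison is legitimate since the functionals are $s$-dependent. Everything else is standard concentration-compactness-for-radial-functions bookkeeping.
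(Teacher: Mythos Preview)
Your proposal is correct but follows a genuinely different route from the paper. The paper's proof is essentially three lines: it simply invokes the a~priori estimates of Lemma~\ref{Lem apriori} (Appendix~A), which---via blow-up and Liouville-type arguments---furnishes for every $s_0<\frac{\alpha}{\lambda_1}$ a uniform $L^\infty$ bound and a uniform exponentially decaying envelope for the whole family $(u_s)_{s\in[0,s_0]}$ in one stroke. With that in hand, elliptic regularity and Arzel\`a--Ascoli give local uniform convergence of subsequences, uniqueness of $u_0$ identifies the limit, and the common exponential envelope immediately upgrades the convergence to uniform on~$\R^n$.

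By contrast, you build compactness from scratch via variational $H^1$ bounds and the Strauss radial lemma. The obstacle you flag---passing from boundedness of the mountain-pass level to a uniform bound on $\|u_s\|_{\lambda_1}$---is real: for the saturated nonlinearity the cubic-case identity $c_s=\tfrac14\|u_s\|_{\lambda_1}^2$ fails, so the implication is not immediate. It can be closed by combining $c_s\to c_0$ with the Poho\v{z}aev identity, which yields $\|\nabla u_s\|_2^2 = n\,c_s$ exactly, and then Gagliardo--Nirenberg controls $\|u_s\|_2$. The paper's route is shorter because Lemma~\ref{Lem apriori} is proved anyway for Theorem~\ref{Thm Bifurcation} and directly delivers exponential decay; your route is more self-contained and avoids the blow-up machinery, but needs the extra Poho\v{z}aev step and, for $n=1$, a separate tail argument since the Strauss bound $|x|^{-(n-1)/2}$ gives no decay there.
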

  \begin{proof}
    As in Lemma~\ref{Lem apriori} in Appendix~A one shows that on every interval $[0,s_0]$ with
    $0<s_0<\frac{\alpha}{\lambda_1}$ there is an exponentially decreasing function which bounds each of the
    functions $u_s$ with $s\in [0,s_0]$ from above. In particular, the Arzel\`{a}-Ascoli Theorem shows $u_s\to
    u_0$ and $W_s\to \alpha\beta u_0^2$ as $s\to 0$ locally uniformly on $\R^n$ so that the uniform
    exponential decay gives $u_s\to u_0$ and $W_s\to \alpha\beta u_0^2$ uniformly on $\R^n$.
  \end{proof}


  \begin{prop} \label{Prop us,vs explode}
    We have
    \begin{align*}
      u_s&\to +\infty,\qquad W_s\to \frac{\beta\lambda_1}{\alpha} 
      \qquad\text{as } s\to   \frac{\alpha}{\lambda_1}
    \end{align*}
    where the convergence is uniform on bounded sets in $\R^n$.
  \end{prop}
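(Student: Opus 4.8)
The plan is to analyze the scalar equation $-\Delta u_s + \lambda_1 u_s = \alpha^2 u_s^3/(1+s\alpha u_s^2)$ in the singular limit $s\to\frac{\alpha}{\lambda_1}$, where the nonlinearity degenerates: for any fixed value $u_s(x)=z$ one has $\alpha^2 z^3/(1+s\alpha z^2)\to \alpha^2 z^3/(1+\frac{\alpha^2}{\lambda_1}z^2)$, and in particular $W_s(x) = \alpha\beta u_s^2/(1+s\alpha u_s^2)\to \frac{\beta\lambda_1}{\alpha}$ \emph{pointwise} only if $u_s(x)\to+\infty$. So the two claimed limits are intertwined: one must show the solutions blow up everywhere on bounded sets, and then the formula for $W_s$ follows by elementary algebra since $\alpha\beta u_s^2/(1+s\alpha u_s^2)\to \beta/s = \beta\lambda_1/\alpha$. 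The heart of the matter is therefore the blow-up $u_s\to+\infty$ locally uniformly.

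First I would establish that $u_s(0)\to+\infty$, where $u_s(0)=\|u_s\|_\infty$ is the (radial, radially decreasing) maximum. This should follow from the known asymptotics of the ground-state energy or mass as $s\uparrow\frac{\alpha}{\lambda_1}$: indeed the upper bound $\frac{\alpha}{\lambda_1}$ on the parameter interval is sharp precisely because ground states cease to exist there, and the mechanism is that they escape to infinity. Concretely, testing the equation with $u_s$ gives $\|u_s\|_{\lambda_1}^2 = \int \alpha^2 u_s^4/(1+s\alpha u_s^2)$; if $u_s(0)$ stayed bounded along a sequence $s_j\uparrow\frac{\alpha}{\lambda_1}$ then the nonlinearity would stay bounded, a subsequence of $u_{s_j}$ would converge (using the uniform exponential decay, exactly as in Proposition~\ref{Prop us s to 0} / Lemma~\ref{Lem apriori}) to a nontrivial solution of the limiting equation $-\Delta u + \lambda_1 u = \alpha^2 u^3/(1+\frac{\alpha^2}{\lambda_1}u^2)$; but for this limiting equation the right-hand side satisfies $\alpha^2 z^3/(1+\frac{\alpha^2}{\lambda_1}z^2) < \frac{\lambda_1}{1}\cdot z$... more precisely $\alpha^2 z^3/(1+\frac{\alpha^2}{\lambda_1}z^2) = \lambda_1 z \cdot \frac{\alpha^2 z^2/\lambda_1}{1+\alpha^2 z^2/\lambda_1} < \lambda_1 z$, so $-\Delta u < 0$ for a positive solution, which via the maximum principle (or integrating against $u$) forces $u\equiv 0$, a contradiction. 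Hence $u_s(0)\to+\infty$.

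Next I would upgrade this to local uniform blow-up. Here the one-dimensional explicit formula and, in higher dimensions, an ODE/scaling comparison are useful: the radial profile satisfies $u_s'' + \frac{n-1}{r}u_s' = \lambda_1 u_s - \alpha^2 u_s^3/(1+s\alpha u_s^2)$, and since the right-hand side is bounded by $\lambda_1 u_s(0)$ uniformly (because $\alpha^2 z^3/(1+s\alpha z^2)\le \alpha^2 z^3/(s\alpha z^2)=\alpha z/s \le \alpha u_s(0)/s$ is also controlled, and in fact $|{-\Delta u_s}|\le \lambda_1 u_s(0)$), one gets $|u_s'(r)|\le \frac{\lambda_1 u_s(0)}{n} r$ and more generally a modulus-of-continuity bound showing $u_s(r)\ge u_s(0) - C(R) u_s(0)\cdot$(something)... that is too crude. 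Instead, the cleaner route: rescale. One knows $u_s(0)\to\infty$; consider $w_s(x):=u_s(x)/u_s(0)$, which is radial, radially decreasing, with $w_s(0)=1$ and $0\le w_s\le 1$, solving $-\Delta w_s = \lambda_1 w_s - \frac{\alpha^2 u_s(0)^2 w_s^3}{1+s\alpha u_s(0)^2 w_s^2}$. The nonlinear term is uniformly bounded (by $\lambda_1$, using $z/(1+sz)\le 1/s$ with $z=\alpha u_s(0)^2 w_s^2$, giving $\le \alpha^2 u_s(0)^2 w_s \cdot \frac{1}{s\alpha u_s(0)^2} = \frac{\alpha w_s}{s}\le \frac{\lambda_1}{1}$ roughly). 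So $\Delta w_s$ is bounded in $L^\infty_{loc}$; by elliptic estimates $w_s\to w$ in $C^1_{loc}$ along a subsequence, with $w(0)=1$, $0\le w\le 1$, and $-\Delta w = \lambda_1 w - \frac{\alpha}{s} w = \lambda_1 w - \lambda_1 w = 0$ wherever $w>0$ — so $w$ is harmonic, bounded, radially decreasing with $w(0)=1$, hence $w\equiv 1$ near the origin, and by connectedness $w\equiv 1$ on all of $\R^n$... but one must be slightly careful about the set $\{w=0\}$; since $w$ is continuous, $w(0)=1$, and harmonic on $\{w>0\}$, the maximum principle propagates $w\equiv 1$. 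Therefore $w_s\to 1$ locally uniformly, i.e. $u_s=u_s(0)w_s\to+\infty$ uniformly on bounded sets. Finally, on a bounded set $u_s\to+\infty$ uniformly gives $W_s = \frac{\alpha\beta u_s^2}{1+s\alpha u_s^2} = \frac{\alpha\beta}{u_s^{-2}+s\alpha}\to \frac{\alpha\beta}{s\alpha} = \frac{\beta}{s}\to\frac{\beta\lambda_1}{\alpha}$ uniformly on that set.

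The main obstacle I anticipate is justifying the blow-up rigorously — specifically ruling out the possibility that $u_s(0)$ stays bounded (which requires either invoking a nonexistence statement for the limiting scalar equation, or a Pohozaev/energy argument showing the ground-state energy blows up as $s\uparrow\frac{\alpha}{\lambda_1}$), and handling the nodal/vanishing set of the limit profile $w$ when passing to the harmonic limit. Both are routine once one commits to the rescaling $w_s = u_s/u_s(0)$, but the nonexistence input for the limit equation $-\Delta u+\lambda_1 u = \alpha^2u^3/(1+\frac{\alpha^2}{\lambda_1}u^2)$ is the genuinely necessary external fact; it follows from the elementary pointwise inequality $\alpha^2 z^3/(1+\frac{\alpha^2}{\lambda_1}z^2)<\lambda_1 z$ for $z>0$ together with the fact that a positive $H^1$ solution would satisfy $0<\|u\|_{\lambda_1}^2 = \int \alpha^2 u^4/(1+\frac{\alpha^2}{\lambda_1}u^2) < \int \lambda_1 u^2 < \|u\|_{\lambda_1}^2$, a contradiction.
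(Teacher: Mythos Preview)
Your approach matches the paper's: first show $u_s(0)=\|u_s\|_\infty\to\infty$, then rescale by $u_s(0)$ and identify the limit of $u_s/u_s(0)$ as the constant $1$ via harmonicity and Liouville. There is, however, one genuine gap in your first step. When you assume $u_s(0)$ stays bounded along a subsequence and pass to a limit, you assert that the limit is a \emph{nontrivial} $H^1$ solution of the limiting equation; but this fails if along that subsequence $u_s(0)\to 0$, in which case the limit is identically zero and your nonexistence contradiction does not fire. The paper treats this case separately: if $u_s(0)\to 0$ then $u_s/u_s(0)$ converges locally uniformly to a bounded nonnegative $\phi$ solving $-\Delta\phi+\lambda_1\phi=0$ with $\phi(0)=\|\phi\|_\infty=1$, and Liouville's theorem applied to the harmonic function $(x,y)\mapsto\phi(x)\cos(\sqrt{\lambda_1}\,y)$ on $\R^{n+1}$ forces $\phi\equiv 0$, a contradiction. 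An even shorter fix (not in the paper) is to evaluate the equation at the maximum: $-\Delta u_s(0)\ge 0$ gives $\lambda_1\le \alpha^2 u_s(0)^2/(1+s\alpha u_s(0)^2)$, which rearranges to $u_s(0)^2\ge \lambda_1/\big(\alpha(\alpha-s\lambda_1)\big)\to\infty$ as $s\to\alpha/\lambda_1$, so no subsequence argument is needed at all.

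For the second step your reasoning is essentially the paper's, though where you write ``the maximum principle propagates $w\equiv 1$'' the paper is more explicit: using that the limit $\phi$ is radially nonincreasing, if $\phi$ were not everywhere positive there would be a smallest radius $\rho$ with $\phi|_{B_\rho}>0$ and $\phi|_{\partial B_\rho}=0$; on $B_\rho$ one has $u_s\to\infty$ and hence $-\Delta\phi=0$ there with zero Dirichlet data, contradicting $\phi>0$ in the interior. Once $\phi>0$ on all of $\R^n$ the same limit gives $\Delta\phi=0$ globally, and Liouville yields $\phi\equiv\phi(0)=1$.
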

  \begin{proof}
    First we show 
    \begin{equation} \label{Gl vs explodes}
      u_s(0)=\max_{\R^n} u_s \to \infty\qquad\text{as }s\to  s^* := \frac{\alpha}{\lambda_1}.
    \end{equation}
    Otherwise we would observe $u_s(0)\to a$ for some subsequence where $a\geq 0$. In case $a>0$ a
    combination of elliptic regularity theory for \eqref{Gl scalar GS} and the Arzel\`{a}-Ascoli Theorem
    would imply that $u_s$ converges locally uniformly to a nontrivial radially symmetric function $u\in
    C^1(\R^n)$ satisfying $-\Delta u + \lambda_2 u = \frac{\alpha^2 u^3}{1+s^*\alpha u^2}$ in $\R^n$ in the
    weak sense and $u(0)=\|u\|_\infty=a$. As in Lemma~\ref{Lem apriori} we
    conclude that the functions $u_s$ are uniformly exponentially decaying so that $u$ even lies in
    $H^1_r(\R^n)$. Hence, we may test the differential equation with $u$ and obtain
    $$  
      \lambda_1 \int_{\R^n} u^2
      \leq \int_{\R^n} |\nabla u|^2 + \lambda_1 u^2
      = \int_{\R^n} \frac{\alpha^2 u^4}{1+s^*\alpha u^2}
      < \frac{\alpha}{s^*} \int_{\R^n} u^2
      = \lambda_1 \int_{\R^n} u^2
    $$
    which is impossible. It therefore remains to exclude the case $a=0$.
    In this case the functions $u_s$ would converge uniformly on $\R^n$ to the trivial solution implying that
    $u_s/u_s(0)$ would converge to a nonnegative bounded function $\phi\in C^1(\R^n)$ satisfying $-\Delta
    \phi+\lambda_1\phi=0$ on $\R^n$ and $\phi(0)=\|\phi\|_\infty=1$. Hence, $\phi$ is smooth so that
    Liouville's Theorem applied to the function $(x,y)\mapsto \phi(x)\cos(\sqrt{\lambda_1}y)$ defined on
    $\R^{n+1}$ implies that $\phi$ is constant and thus $\phi\equiv 0$ contradicting $\phi(0)=1$.
    This proves \eqref{Gl vs explodes}.
    
    \medskip
    
    Now set $\phi_s:= u_s/u_s(0)$. Using 
    $$
      -\Delta\phi_s + \lambda_1 \phi_s = \alpha\phi_s \cdot \frac{\alpha u_s^2}{1+s\alpha u_s^2}
      \qquad\text{in }\R^n
    $$
    and the fact that $\alpha u_s^2/(1+s\alpha u_s^2)$ remains bounded as $s\to s^*$ we get that the
    functions $\phi_s$ converge locally uniformly as $s\to s^*$ to some nonnegative radially nonincreasing
    function $\phi\in C^1(\R^n)$ satisfying $\phi(0)=\|\phi\|_\infty=1$. In order to prove our claim it is
    sufficient to show $\phi\equiv 1$ since this implies $u_s= u_s(0)\phi_s\to \infty$  locally uniformly and
    in particular $W_s\to \frac{\beta\lambda_1}{\alpha}$ locally uniformly.
    
    \medskip
        
    First we show $\phi>0$. If this were not true then there would exist a smallest number ${\rho\in
    (0,\infty)}$ such that $\phi|_{B_\rho}>0$ and $\phi|_{\partial B_r}=0$ for all $r\in [\rho,\infty)$. On
    $B_\rho$ we have $v_s\to\infty$ and $\alpha^2 u_s^2/(1+s\alpha u_s^2)\to\lambda_1$ implies     
    $-\Delta \phi + \lambda_1 \phi = \lambda_1\phi$ in $B_\rho$ and $\phi|_{\partial B_\rho}=0$ in
    contradiction to the maximum principle. Hence, we must have $\phi>0$ in $\R^n$. Repeating the above
    argument we find $-\Delta \phi + \lambda_1 \phi = \lambda_1 \phi$ in $\R^n$ and
    $\phi(0)=\|\phi\|_\infty=1$ so that Liouville's Theorem implies $\phi\equiv \phi(0)=1$.
  \end{proof}
  
  The previous Proposition enables us to calculate the limits of the eigenvalue functions $\mu_k(s)$ as
  $s$ approaches the boundary of $(0,\frac{\alpha}{\lambda_1})$. 
  
  \begin{prop} \label{Prop eigenvalue functions}
    For all $k\in\N_0$ the functions $\mu_k$ are positive and continuous on $(0,\frac{\alpha}{\lambda_1})$.
    Moreover we have
    \begin{align*}
      \mu_k(s) \to \bar\mu_k \quad\text{as } s\to 0, \qquad
      \mu_k(s) \to \frac{\beta\lambda_1}{\alpha\lambda_2} \quad\text{as } s\to \frac{\alpha}{\lambda_1}
    \end{align*}
  \end{prop}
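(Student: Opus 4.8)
The plan is to exploit the min-max characterization of the eigenvalues $\mu_k(s)$ of the compact self-adjoint operator $L(s)$ together with the uniform convergence results for $W_s$ obtained in Propositions~\ref{Prop us s to 0} and~\ref{Prop us,vs explode}. By the Courant-Fischer principle applied to the bilinear form $(\phi,\psi)\mapsto \int_{\R^n} W_s\phi\psi$ relative to the inner product $\skp{\cdot}{\cdot}_{\lambda_2}$ on $H^1_r(\R^n)$, each $\mu_k(s)$ is given by a min-max over $k$-dimensional (or codimension-$k$) subspaces of the Rayleigh quotient $\int_{\R^n} W_s\phi^2 / \|\phi\|_{\lambda_2}^2$. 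Positivity of $\mu_k(s)$ for every $k$ follows because $W_s>0$ on $\R^n$, so the operator $L(s)$ is positive and injective on $H^1_r(\R^n)$ (a fact already implicit in the Sturm-Liouville remark preceding Theorem~\ref{Thm Bifurcation} for the case $s=0$), hence has infinitely many strictly positive eigenvalues accumulating only at $0$.

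For continuity on $(0,\frac{\alpha}{\lambda_1})$ I would first show that $s\mapsto W_s$ is continuous from $(0,\frac{\alpha}{\lambda_1})$ into $L^\infty(\R^n)$ with a uniform exponential decay bound on compact $s$-subintervals; this follows exactly as in Proposition~\ref{Prop us s to 0}, using the a priori bound of Lemma~\ref{Lem apriori}, elliptic regularity for \eqref{Gl scalar GS}, the Arzel\`a-Ascoli theorem and the uniqueness of $u_s$ to upgrade local uniform convergence of $u_s$ (along subsequences) to genuine convergence as $s$ varies. Then $L(s)$ depends norm-continuously on $s$ as an operator on $H^1_r(\R^n)$, since $\|L(s)\phi - L(s')\phi\|_{\lambda_2}\leq C\|W_s - W_{s'}\|_\infty\|\phi\|_{\lambda_2}$ by elliptic estimates for $(-\Delta+\lambda_2)^{-1}$. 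Norm continuity of a family of compact self-adjoint operators forces continuity of each eigenvalue $\mu_k(s)$, which handles the continuity claim.

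The two limits are then computed by passing to the limit in the Rayleigh quotients. As $s\to 0$, Proposition~\ref{Prop us s to 0} gives $W_s\to \alpha\beta u_0^2$ uniformly with uniform exponential decay, so $L(s)\to L(0):\phi\mapsto(-\Delta+\lambda_2)^{-1}(\alpha\beta u_0^2\phi)$ in operator norm; since $\bar\mu_k$ are precisely the eigenvalues of $L(0)$ by definition, $\mu_k(s)\to\bar\mu_k$. As $s\to\frac{\alpha}{\lambda_1}$, Proposition~\ref{Prop us,vs explode} gives $W_s\to\frac{\beta\lambda_1}{\alpha}$ uniformly on bounded sets, but here one must be careful because the convergence is only local and $W_s$ need not decay — indeed $W_s$ converges pointwise to the positive constant $\frac{\beta\lambda_1}{\alpha}$. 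I expect this to be the main obstacle: the limiting "operator" $\phi\mapsto\frac{\beta\lambda_1}{\alpha}(-\Delta+\lambda_2)^{-1}\phi$ is exactly $\frac{\beta\lambda_1}{\alpha\lambda_2}$ times a multiple of the identity's resolvent, whose spectrum is the continuous interval $(0,\frac{\beta\lambda_1}{\alpha\lambda_2}]$, not a discrete set, so one cannot simply invoke norm convergence. The resolution is to show that for each fixed $k$ the $k$-th eigenfunction $\phi_s^k$ of $L(s)$ has its mass escaping to infinity as $s\to\frac{\alpha}{\lambda_1}$ in such a way that $\int W_s(\phi_s^k)^2/\|\phi_s^k\|_{\lambda_2}^2 \to \frac{\beta\lambda_1}{\alpha\lambda_2}$: one obtains the lower bound $\mu_k(s)\geq \frac{\beta\lambda_1}{\alpha\lambda_2} - o(1)$ by plugging suitably translated fixed test functions (on which $W_s\to\frac{\beta\lambda_1}{\alpha}$ uniformly) into the min-max, using $\|(-\Delta+\lambda_2)^{-1}\psi\|_{\lambda_2}^2 / \|\psi\|_{\lambda_2}^2 \to \frac{1}{\lambda_2}$ for slowly-varying $\psi$; and the upper bound $\mu_k(s)\leq \frac{\beta\lambda_1}{\alpha\lambda_2} + o(1)$ from the uniform estimate $W_s\leq \frac{\beta\lambda_1}{\alpha} + o(1)$, which holds globally since $\frac{\alpha u_s^2}{1+s\alpha u_s^2}\leq\frac{1}{s}\to\frac{\lambda_1}{\alpha}$. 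Combining the two bounds yields $\mu_k(s)\to\frac{\beta\lambda_1}{\alpha\lambda_2}$, completing the proof.
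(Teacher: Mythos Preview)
Your treatment of positivity, continuity on $(0,\frac{\alpha}{\lambda_1})$, and the limit $s\to 0$ matches the paper's argument: uniform-in-$s$ exponential decay of $u_s$ gives $W_s\to W_{s_0}$ in $L^\infty(\R^n)$, hence $L(s)\to L(s_0)$ in operator norm, and the Courant--Fischer principle transfers this to each $\mu_k$.

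For the limit $s\to s^*:=\frac{\alpha}{\lambda_1}$ your route diverges from the paper's. Both obtain the upper bound $\limsup\mu_k(s)\leq\frac{\beta\lambda_1}{\alpha\lambda_2}$ from the global pointwise estimate $W_s\leq\beta/s$ (equivalently $\|W_s\|_\infty\to\frac{\beta\lambda_1}{\alpha}$). For the lower bound the paper does not argue variationally with test functions: instead it compares $W_s$ from below with the box potential $(\kappa-\eps)1_{B_{1/\eps}}$ (legitimate once $W_s\geq\kappa-\eps$ on $B_{1/\eps}$ by the local uniform convergence of Proposition~\ref{Prop us,vs explode}), and then solves the resulting radial eigenvalue problem for $(-\Delta+\lambda_2)^{-1}\big((\kappa-\eps)1_{B_{1/\eps}}\,\cdot\,\big)$ explicitly via Bessel functions. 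This computation, isolated as a separate lemma in an appendix, shows that the $k$-th eigenvalue of the box problem tends to $\frac{\kappa}{\lambda_2}$ as $\eps\to 0$, whence $\liminf\mu_k(s)\geq\frac{\kappa}{\lambda_2}$.

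Your variational alternative is viable, but one word must be fixed: you cannot use \emph{translated} test functions in $H^1_r(\R^n)$, since translates of nontrivial radial functions are not radial. What works is \emph{dilation}: fix $k+1$ linearly independent radial $\psi_0,\ldots,\psi_k\in C_c^\infty(B_1)$ and set $\psi_j^R(x)=\psi_j(x/R)$. On their span one has $\|\nabla\phi\|_2^2\leq C R^{-2}\|\phi\|_2^2$ uniformly (finite-dimensionality), while $\int W_s\phi^2\geq(\kappa-\eps)\|\phi\|_2^2$ once $s$ is close enough to $s^*$ that $W_s\geq\kappa-\eps$ on $B_R$. Hence every Rayleigh quotient on this $(k+1)$-dimensional subspace exceeds $\frac{\kappa-\eps}{\lambda_2+CR^{-2}}$, and the max--min characterization gives $\mu_k(s)\geq\frac{\kappa-\eps}{\lambda_2+CR^{-2}}$; send $s\to s^*$, then $R\to\infty$, then $\eps\to 0$. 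With that correction your argument is complete and arguably more elementary than the paper's Bessel-function computation; the paper's route, on the other hand, packages the conclusion as a reusable stand-alone spectral lemma.
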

  \begin{proof}
    As in Proposition \ref{Prop us s to 0} the uniform exponential decay of the functions $u_s$ for $s\in
    [0,s^*)$ for $s^*:=\frac{\alpha}{\lambda_1}$ implies $u_s\to u_{s_0},W_s\to W_{s_0}$ uniformly on $\R^n$
    whenever $s_0\in [0,s^*]$. Hence, the Courant-Fischer min-max-characterization for the eigenvalues
    $\mu_k(s)$ implies the continuity of $\mu_k$ as well as $\mu_k(s)\to \bar\mu_k$ as $s\to 0$.
    
    \medskip
    
    In order to evaluate $\mu_k(s)$ for $s\to s^*$ we apply Lemma~\ref{Lem
    apriori} from Appendix~C. The conditions~(i) and (ii) of the Lemma are satisfied since we have 
    $\|W_s\|_\infty=W_s(0)\to \frac{\beta\lambda_1}{\alpha}$ and $W_s\to \frac{\beta\lambda_1}{\alpha}$
    locally uniformly as $s\to s^*$ by Proposition~\ref{Prop us,vs explode}. From the Lemma we get
    $\mu_k(s)\to \frac{\beta\lambda_1}{\alpha\lambda_2}$ as $s\to s^*$ which is all we had to show.
  \end{proof}
 
   \medskip
   
  \begin{figure}[h!] \label{Fig eigenvalue functions}
    \centering
    \begin{tikzpicture}[yscale=5, xscale=2.4]
	  \draw[->] (0,0) -- (5.3,0) node[right] {$s$};
	  \draw[->] (0,0) node[below]{$0$} -- (0,1.7); 
	  \draw plot[smooth, tension=0.7] coordinates {  (0,0.8)  (3,1.3) (5,1.5)};
	  \node[left] at (0,0.8) {$\bar\mu_{k_0}$};
  	  \draw[dashed,thin] (1.15,0) node[below]{$s_{k_0}$} -- (1.15,1);
  	  \draw plot[smooth, tension=0.7] coordinates {  (0,0.5) (2,1.05) (3,0.95) (5,1.5)};
  	  \node[left] at (0,0.5) {$\bar\mu_{k_0+1}$};
  	  \draw[dashed,thin] (3.28,0) node[below]{$s_{k_0+1}$} -- (3.28,1);
  	  \draw plot[smooth, tension=0.6] coordinates {  (0,0.3) (2.6,0.5) (5,1.5)};
  	  \node[left] at (0,0.3) {$\bar\mu_{k_0+2}$};
  	  \draw[dashed,thin] (3.96,0) node[left,below]{$s_{k_0+2}$} -- (3.96,1);
  	  \draw plot[smooth, tension=0.5] coordinates {  (0,0.2)  (3,0.3) (4.3,0.9) (5,1.5)};
  	  \node[left] at (0,0.2) {$\bar\mu_{k_0+3}$};
  	  \draw[dashed,thin] (4.45,0) node[right,below]{$s_{k_0+3}$} -- (4.45,1);
  	  \draw[dashed,thin] (5,0) node[below] {$\frac{\alpha}{\lambda_1}$} -- (5,1.5);
  	  \draw[dashed,thin] (0,1.5) node[left] {$\frac{\beta\lambda_1}{\alpha\lambda_2}$} -- (5,1.5);
  	  \draw[dashed,thin] (0,1) node[left] {$1$} -- (5,1);
 	\end{tikzpicture}
 	\caption{The eigenvalue functions $\mu_{k_0},\ldots,\mu_{k_0+3}$ on $(0,\frac{\alpha}{\lambda_1})$.}
\end{figure}
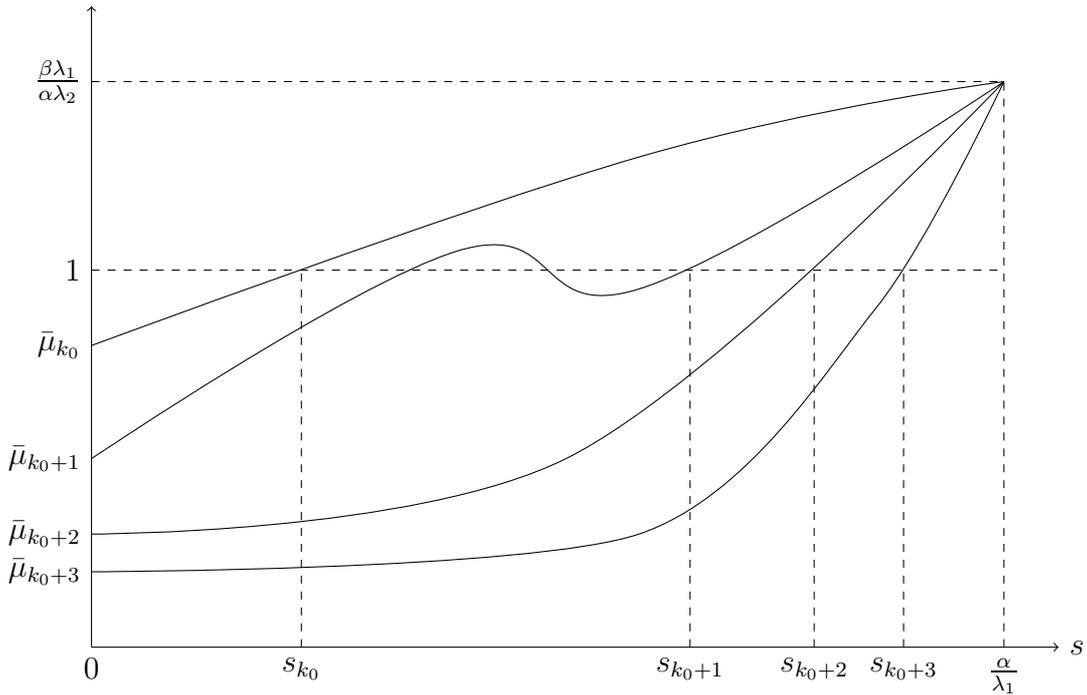

\medskip
  
  
  \begin{bem} \label{Bem ngeq4}    
    When $n\geq 4$ the statement of Proposition~\ref{Prop us s to 0} is not meaningful 
    since $u_0$ does not exist in this case by Poho\v{z}aev's identity. So it is natural to ask how $u_s,W_s$
    and $\mu_k$ behave when $s$ approaches zero and $n\geq 4$. Having found an answer to this question it
    might be possible to modify our reasoning in order to prove sufficient conditions for the
    existence of bifurcation points from $\mathcal{T}_2$ in the case $n\geq 4$.
  \end{bem}  
  
  The above Propositions are sufficient for proving the mere existence of the continua
  $\mathcal{C}_k$ from Theorem~\ref{Thm Bifurcation}. So it remains to show that positive solutions lie to the
  left of the threshold value $\frac{\alpha-\beta}{\lambda_1-\lambda_2}$ and that they are equibounded in
  $X$. The latter result will be proved in Lemma~\ref{Lem apriori} whereas the first claim follows from
  the following nonexistence result which slightly improves Theorem~3.10 and Theorem~3.11
  from~\cite{MMP_weakly_coupled}.
  
  \begin{prop}\label{Prop nonexistence}
    If positive solutions of \eqref{Gl equation} exist then we either have 
    $$
      \text{(i)}\quad \lambda_1=\lambda_2,\alpha=\beta
      \quad\qquad\text{or}\qquad\quad
      \text{(ii)}\quad   s < \frac{\alpha-\beta}{\lambda_1-\lambda_2}<\min\Big\{
      \frac{\alpha}{\lambda_1},\frac{\beta}{\lambda_2}\Big\}.
    $$ 
  \end{prop}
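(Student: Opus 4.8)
\emph{Proof plan.} Let $(u,v)\in H^1(\R^n)\times H^1(\R^n)$ be a positive solution of \eqref{Gl equation}; by elliptic regularity and standard decay estimates $u,v$ are smooth and exponentially decaying, hence admissible test functions. The whole argument rests on one scalar identity extracted from the system. Testing the first equation of \eqref{Gl equation} with $v$ and the second with $u$ gives
\[
  \int_{\R^n}\nabla u\cdot\nabla v + \lambda_1\int_{\R^n}uv = \int_{\R^n}\frac{\alpha uvZ}{1+sZ},
  \qquad
  \int_{\R^n}\nabla u\cdot\nabla v + \lambda_2\int_{\R^n}uv = \int_{\R^n}\frac{\beta uvZ}{1+sZ},
\]
and subtracting these (the gradient terms cancel) yields
\[
  (\alpha-\beta)\int_{\R^n}\frac{uvZ}{1+sZ} = (\lambda_1-\lambda_2)\int_{\R^n}uv.
\]
Since $u,v>0$ we have $\int_{\R^n}uv>0$ and $\int_{\R^n}\frac{uvZ}{1+sZ}>0$, and since $Z>0$ the pointwise bound $0<\frac{Z}{1+sZ}<\frac1s$ holds.

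The dichotomy follows at once from the subtracted identity: its left-hand side has the sign of $\alpha-\beta$ and its right-hand side the sign of $\lambda_1-\lambda_2$, so $\alpha=\beta$ if and only if $\lambda_1=\lambda_2$, which is situation (i). Otherwise $\alpha\neq\beta$ and $\lambda_1\neq\lambda_2$ with $\alpha-\beta$ and $\lambda_1-\lambda_2$ of equal sign, and by the symmetry of \eqref{Gl equation} under $(u,\lambda_1,\alpha)\leftrightarrow(v,\lambda_2,\beta)$ we may assume $\alpha>\beta$ and $\lambda_1>\lambda_2$. Inserting $\frac{Z}{1+sZ}<\frac1s$ into the subtracted identity then gives $s(\lambda_1-\lambda_2)<\alpha-\beta$, i.e. $s<\frac{\alpha-\beta}{\lambda_1-\lambda_2}$.

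It remains to prove $\frac{\alpha-\beta}{\lambda_1-\lambda_2}<\min\{\frac\alpha{\lambda_1},\frac\beta{\lambda_2}\}$, which — as $\alpha>\beta>0$ and $\lambda_1>\lambda_2>0$ — is elementarily equivalent to $\lambda_1\beta>\alpha\lambda_2$. Solving the subtracted identity for $\int_{\R^n}\frac{uvZ}{1+sZ}$ and substituting into the first displayed identity gives
\[
  \int_{\R^n}\nabla u\cdot\nabla v = \Big(\frac{\alpha(\lambda_1-\lambda_2)}{\alpha-\beta}-\lambda_1\Big)\int_{\R^n}uv = \frac{\lambda_1\beta-\alpha\lambda_2}{\alpha-\beta}\int_{\R^n}uv,
\]
so, since $\alpha>\beta$ and $\int_{\R^n}uv>0$, the inequality $\lambda_1\beta>\alpha\lambda_2$ is \emph{equivalent} to $\int_{\R^n}\nabla u\cdot\nabla v>0$. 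To obtain the latter I would use that \eqref{Gl equation} is a cooperative system — one checks $\partial_v\big(\frac{\alpha uZ}{1+sZ}\big)=\partial_u\big(\frac{\beta vZ}{1+sZ}\big)=\frac{2\alpha\beta uv}{(1+sZ)^2}>0$ for $u,v>0$ — so that its positive solutions are, up to translation, radially symmetric and strictly radially decreasing in both components by the moving-plane method \cite{GiNiNi_Symmetry} and its extension to cooperative elliptic systems. Writing $u(x)=\tilde u(|x|)$, $v(x)=\tilde v(|x|)$ with $\tilde u'<0$, $\tilde v'<0$ on $(0,\infty)$ one then gets $\nabla u\cdot\nabla v=\tilde u'(|x|)\,\tilde v'(|x|)>0$ almost everywhere, hence $\int_{\R^n}\nabla u\cdot\nabla v>0$; this gives the strict inequality $\lambda_1\beta>\alpha\lambda_2$ and completes the proof.

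The only step that is not a routine computation is this last one: beyond the elementary scalar identity above, the content of the proposition is exactly that $\int_{\R^n}\nabla u\cdot\nabla v>0$, and I do not see a way to establish it without the radial symmetry and strict monotonicity of positive solutions, which is therefore the point requiring care.
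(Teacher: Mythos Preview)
Your argument is correct. The testing identity and the bound $s<\frac{\alpha-\beta}{\lambda_1-\lambda_2}$ via $\frac{Z}{1+sZ}<\frac1s$ are exactly what the paper does, though the paper phrases the latter as a pointwise sign-change argument for $\lambda_1-\lambda_2-(\alpha-\beta)\frac{Z}{1+sZ}$ rather than an integral inequality.

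The genuine difference is in the second inequality $\frac{\alpha-\beta}{\lambda_1-\lambda_2}<\min\{\frac{\alpha}{\lambda_1},\frac{\beta}{\lambda_2}\}$. The paper does not argue this directly but cites Theorem~3.11 and Remark~3.18 of \cite{MMP_weakly_coupled}. Your route is more explicit: you extract the clean identity
\[
  \int_{\R^n}\nabla u\cdot\nabla v \;=\; \frac{\lambda_1\beta-\alpha\lambda_2}{\alpha-\beta}\int_{\R^n}uv,
\]
which reduces the question to $\int_{\R^n}\nabla u\cdot\nabla v>0$, and then settle this via the joint radial symmetry and strict monotonicity of $(u,v)$ for cooperative systems. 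This is a legitimate and self-contained alternative; the radial symmetry input you invoke is precisely what the paper (and \cite{MMP_weakly_coupled}, Lemma~3.8) also use elsewhere, so you are not importing anything foreign. What your approach buys is transparency: it isolates exactly the analytic content of the inequality as positivity of $\int\nabla u\cdot\nabla v$. What the paper's citation buys is brevity. Your own closing remark that this last step is ``the point requiring care'' is accurate, but the care needed is only the standard moving-plane result for cooperative systems, which is available.
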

  \begin{proof}
    Assume there is a positive solution $(u,v)$ of \eqref{Gl equation}. Testing  \eqref{Gl equation} with
    $(v,u)$ leads to
    $$
      \int_{\R^n} uv\Big(\lambda_1-\lambda_2-(\alpha-\beta)\frac{Z}{1+sZ}\Big) = 0.
    $$
    Hence, the function $\lambda_1-\lambda_2-(\alpha-\beta)\frac{Z}{1+sZ}$ vanishes identically or
    it changes sign in~$\R^n$. In the first case we get (i), so let us assume that the function changes sign.
    Then we have $\lambda_1\neq \lambda_2$ and $\alpha\neq \beta$ so that Theorem~3.11 and Remark~3.18 in
    \cite{MMP_weakly_coupled} imply $0<\frac{\alpha-\beta}{\lambda_1-\lambda_2}<\min\{
    \frac{\alpha}{\lambda_1},\frac{\beta}{\lambda_2}\}$. Moreover, $s\geq
    \frac{\alpha-\beta}{\lambda_1-\lambda_2}$ would imply
    $$
      \Big|\lambda_1-\lambda_2-(\alpha-\beta)\frac{Z}{1+sZ}\Big|
      >  |\lambda_1-\lambda_2|-\frac{|\alpha-\beta|}{s}
      \geq 0
      \qquad\text{on }\R^n
    $$
    contradicting the assumption that $\lambda_1-\lambda_2-(\alpha-\beta)\frac{Z}{1+sZ}$ changes sign. Hence,
    we have $s < \frac{\alpha-\beta}{\lambda_1-\lambda_2}$ which concludes the proof.
  \end{proof}
%
%
  
  \medskip
    
  {\it Proof of Theorem~\ref{Thm Bifurcation}:}\;
  The main ingredient of our proof is the Krasnoselski-Rabinowitz Global Bifurcation Theorem (cf.
  \cite{Kr_topological_methods},\cite{Rab_Some_global_results} or \cite{Ki_bifurcation_theory}, Theorem
  II.3.3), which, roughly speaking, says that a change of the Leray-Schauder index along a given solution
  curve over some parameter interval implies the existence of a bifurcating continuum emanating from the
  solution curve within this parameter interval. In our application the solution curve is
  $\mathcal{T}_2$ and the first task is to identify parameter intervals within $(0,\frac{\alpha}{\lambda_1})$ 
  where the index changes. For notational purposes we set $s^*:=\frac{\alpha}{\lambda_1}$.
  
  \medskip
  
  {\it 1st step: Existence of solution continua $\mathcal{C}_k$ bifurcating from $\mathcal{T}_2$.}\; 
  By assumption of the Theorem and Proposition~\ref{Prop eigenvalue functions} we have 
  $$
    \lim_{s\to 0} \mu_k(s)=\bar \mu_k<1, \qquad 
    \lim_{s\to s^*} \mu_k(s)= \frac{\beta\lambda_1}{\alpha\lambda_2} > 1
    \qquad\text{for all }k\geq k_0.
  $$ 
  The continuity of the $\mu_k$ on $(0,s^*)$ as well as $\mu_k(s)>\mu_{k+1}(s)$ for all $k\geq k_0,s\in
  (0,s^*)$ therefore implies
  $0<a_{k_0}<a_{k_0+1}<a_{k_0+2}<\ldots<\frac{\alpha}{\lambda_1}$ for the numbers $a_k$ given by 
  $$
    a_k:= \sup\Big\{ 0<s<\frac{\alpha}{\lambda_1} : \mu_k(s)<1 \Big\} \qquad (k\geq k_0).
  $$
  By definition of $a_k$ we can find $\un{a}_k<a_k<\ov{a}_k$ such that the following holds:  
  \begin{align} \label{Gl def intervals}
    \begin{aligned} 
    \text{(i)}\quad& \mu_k(s)<1<\mu_{k-1}(\un{a}_k) &&\text{for all }s\leq  \un{a}_k,k\geq k_0, \\
    \text{(ii)}\quad& \mu_k(s)>1>\mu_{k-1}(\ov{a}_k) &&\text{for all }s\geq  \ov{a}_k,k\geq k_0, \\
    \text{(iii)}\quad& a_k-\frac{1}{k}<\un{a}_k<\ov{a}_k<\un{a}_{k+1} &&\text{for all } k\geq k_0.
    \end{aligned}
  \end{align}
  In fact one first chooses $\ov{a}_k\in (a_k,a_{k+1})$ such that (ii) is satisfied and then
  $\un{a}_k<a_k$ sufficiently close to $a_k$ such that (i) and (iii) hold. Now let us show that the
  Leray-Schauder index $\ind(F(\cdot,s),(u_s,0))$ changes sign on each of the mutually disjoint intervals
  $(\un{a}_k,\ov{a}_k)$.
  
  \medskip
  
  The index of $F(\cdot,s)$ near $(u_s,0)$ is computed using the Leray-Schauder formula which involves
  the algebraic multiplicities of the eigenvalues $\mu>1$ of the compact linear operator $\Id-\partial_X
  F(u_s,0,s)$, see (II.2.11) in \cite{Ki_bifurcation_theory}. From the formulas appearing in
  Proposition~\ref{Prop kernels} we find that $\mu>1$ is such an eigenvalue if and only if one of the following equations is solvable:
  \begin{align*}
    (-\Delta+\lambda_1)^{-1}\Big( \frac{3\alpha^2 u_s^2+s\alpha^3 u_s^4}{(1+s\alpha u_s^2)^2}\phi\Big) =
    \mu\phi \quad\text{in }\R^n,\quad \phi\in H^1_r(\R^n),\phi\neq 0, \\    
    L(s)\psi=(-\Delta+\lambda_2)^{-1}(W_s\psi) = \mu\psi \quad\text{in }\R^n,\quad \psi\in 
    H^1_r(\R^n),\psi\neq 0. 
  \end{align*}
  When $s=\un{a}_k$ then the second equation is solvable with $\mu>1$ if and only if 
  $\mu$ is an eigenvalue of $L(\un{a}_k)$ larger than 1. By \eqref{Gl def intervals}~(i) this is
  equivalent to $\mu\in \{\mu_0(\un{a}_k),\ldots,\mu_{k-1}(\un{a}_k)\}$. Due to Sturm-Liouville theory each
  of these eigenvalues is simple. The first equation is solvable with $\mu>1$ if and only if
  $\Delta+g'(u_s)$ has a negative eigenvalue in $H^1_r(\R^n)$ where $g$ is defined as in \eqref{Gl g'(us)}.
  From Theorem~5.4 (4)-(6) in \cite{BatShi_Existence_and_instability} we infer that there is precisely one
  such eigenvalue $\mu>1$ and $\mu$ has algebraic multiplicity one. Denoting the
  $H^1_r(\R^n)$-spectrum with $\sigma$ we arrive at the formula
  \begin{align*}
    \ind(F(\cdot,\un{a}_k),(0,v_{\un{a}_k}))
    &= (-1)^{\#\{\mu\in\sigma(\Id-\partial_X F(0,v_{\un{a}_k},\un{a}_k)): \mu>1\}} \\
    &= (-1)^{k+1} \\
    &= -(-1)^{k+2} \\
    &= -(-1)^{\#\{\mu\in\sigma(\Id-\partial_X F(0,v_{\ov{a}_k},\ov{a}_k)): \mu>1\}} \\
    &= - \ind(F(\cdot,\ov{a}_k),(0,v_{\ov{a}_k})).    
  \end{align*}
  The Krasnoselski-Rabinowitz Theorem implies that the interval $(\un{a}_k,\ov{a}_k)$ contains a least one
  bifurcation point $(u_{s_k},0,s_k)$ so that the maximal component $\mathcal{C}_k$ in $\mathcal{S}$
  satisfying $(u_{s_k},0,s_k)\in\mathcal{C}_k$ is non-void. By Proposition~\ref{Prop kernels} this implies
  $\mu_j(s_k)=1$ for some $j\in\N_0$ and \eqref{Gl def intervals} implies $j=k$, i.e. $\mu_k(s_k)=1$.
  Indeed, property (ii) gives $\mu_{k-1}(s_k)>1$ and (i) gives $\mu_{k+1}(s_k)<1$. 
  
  \medskip
  
  {\it 2nd step: $s_k\to s^*$ as $k\to\infty.$}\; If not then we would have $s_k\to \bar
  s$ from below for some $\bar s<s^*$. From $s_k\in (\un{a}_k,\ov{a}_k)$, the inequality
  $\un{a}_k>a_k-1/k$ and the definition of $a_k$ we deduce 
  $\mu_k(t) \geq 1$ whenever $t\geq s_k+\frac{1}{k},k\geq k_0$ and thus 
  $$
    \mu_k(t)\geq 1 \qquad\text{for all } t\in \Big(\frac{\bar s+s^*}{2},s^*\Big) \text{ and }k\geq k_1
  $$
  for some sufficiently large $k_1\in\N$.
  This contradicts $\mu_k(t)\to 0$ as $k\to\infty$ for all $t\in (0,s^*)$ and the claim is proved.  
  
  \medskip
  
  {\it 3nd step: Existence of seminodal solutions within $\mathcal{C}_k$.}\; We briefly show that fully
  nontrivial solutions of \eqref{Gl equation} belonging to a sufficiently small neighbourhood of
  $(u_{s_k},0,s_k)$ are $(0,k)$-nodal. Indeed, if solutions $(u^m,v^m,s^m)$ of \eqref{Gl equation} converge
  to $(u_{s_k},0,s_k)$ then $v^m/v^m(0)$ converges to the eigenfunction $\phi$ of $L(s_k)$ with $\phi(0)=1$
  which is associated to the eigenvalue~$1$. Due to $\mu_k(s_k)=1$ and Sturm-Liouville theory $\phi$ has
  precisely $k+1$ nodal annuli so that the same is true for $v^m$ and sufficiently large $m\in\N$. On other
  hand $u^m\to u$ implies that $u^m$ must be positive for large $m$ which proves the claim. 
  
  \medskip
  
  {\it 4rd step: Positive solutions.}\; The claim concerning positive solutions of~\eqref{Gl equation} follows
  directly from Proposition~\ref{Prop nonexistence} and Lemma~\ref{Lem apriori} from Appendix~A. \qed

  \section{Proof of Corollary \ref{Cor 1} and Corollary \ref{Cor 2}}
  
  Let $\zeta\in H^1_r(\R^n)$ be the unique positive function 
  which satisfies $-\Delta\zeta +\zeta=\zeta^3$ in $\R^n$ so that $u_0,v_0$ can be rewritten as 
  $$
    u_0(x) = \sqrt{\lambda_1}\alpha^{-1} \zeta(\sqrt{\lambda_1}x),\qquad
    v_0(x) = \sqrt{\lambda_2}\beta^{-1} \zeta(\sqrt{\lambda_2}x).
  $$
  Hence, Corollary~\ref{Cor 1} follows from Theorem~\ref{Thm Bifurcation} and the estimate
  \begin{align*}
	\bar\mu_0
	= \max_{\phi\neq 0} \frac{\alpha\beta\|u_0\phi\|_2^2}{\|\phi\|_{\lambda_2}^2} 
	\leq \max_{\phi\neq 0} \frac{\alpha\beta\|u_0\|_4^2\|\phi\|_4^2}{\|\phi\|_{\lambda_2}^2} 
	= \frac{\alpha\beta\|u_0\|_4^2\|v_0\|_4^2}{\|v_0\|_{\lambda_2}^2}
	= \frac{\alpha}{\beta} \frac{\|u_0\|_4^2}{\|v_0\|_4^2} 
	= \frac{\beta}{\alpha} \Big( \frac{\lambda_1}{\lambda_2}\Big)^{\frac{4-n}{4}}.
  \end{align*}
  In case $n=1$ we have $\zeta(x)=\sqrt{2}\sech(x)$ and it is known (see for instance Lemma~5.1
  in~\cite{deFLop_Solitary_waves}) that the eigenvalue problem $\mu(-\phi''+\omega^2\phi) =
  \zeta^2\phi$ in $\R$ admits nontrivial solutions in $H^1_r(\R)$ if and only if 
  $2/\mu = (\omega+2k)(\omega+2k+1)$ for some $k\in\N_0$. This implies  
  $$
    \bar\mu_k = \frac{\beta}{\alpha}
    \frac{2}{(\sqrt{\frac{\lambda_2}{\lambda_1}}+2k)(\sqrt{\frac{\lambda_2}{\lambda_1}}+2k+1)} \qquad
    (k\in\N_0) 
  $$	
  and Corollary~\ref{Cor 2} follows from Theorem~\ref{Thm Bifurcation}.
  
%
  
  \section{Open problems}  \label{sec Open problems}
  
  Let us finally summarize some open problems concerning~\eqref{Gl equation} which we were not able to solve
  and which we believe to provide a better understanding of the equation. Especially the open questions
  concerning global bifurcation scenarios are supposed to be very difficult from the analytical point of
  view so that numerical indications would be very helpful, too. The following questions might be of interest:
    
  \begin{enumerate}
    \item As in the author's work on weakly coupled nonlinear Schr\"odinger systems \cite{Man_Minimal_I} 
    one could try to prove the existence of positive solutions by minimizing the Euler functional over
    the ''system Nehari manifold'' $\mathcal{M}_s$ consisting of all fully nontrivial functions $(u,v)\in X$
    which satisfy $I'(u,v)[(u,0)]=I'(u,v)[(0,v)]=0$.
    For which parameter values $\alpha,\beta,\lambda_1,\lambda_2,s$ are there such minimizers and do they
    belong to $\mathcal{C}_0$?
    \item What is the existence theory and the bifurcation scenario when
    $\alpha\lambda_2=\beta\lambda_1$ and $\alpha\neq \beta,\lambda_1\neq \lambda_2$?       
    \item In the case $\alpha=\beta,\lambda_1=\lambda_2$ the points on $\mathcal{T}_1,\mathcal{T}_2$ are
    connected by a smooth curve and the same is true for every  semitrivial solution. Do these connections
    break up when the parameters of the equation are perturbed? This is related to the question whether
    the continuum $\mathcal{C}_0$ contains $\mathcal{T}_1$.
    \item It would be interesting to know if the eigenvalue functions $\mu_k$ are strictly
    monotone. The monotonicity of $\mu_k$ would imply that $s_k$ are the only solutions of
    $\mu_k(s)=1$ so that the totality of bifurcation points is given by $(s_k)_{k\geq k_0}$. 
    \item We expect that $\mathcal{T}_1,\mathcal{T}_2$ extend to semitrivial solution branches
    $\tilde{\mathcal{T}}_1,\tilde{\mathcal{T}}_2$ containing also negative parameter values $s$. A 
    bifurcation analysis for such branches remains open. Let us shortly comment why we expect an interesting
    outcome of such a study. In the model case $n=1$ and $\beta=\lambda_2=1$ one obtains
    from~\eqref{Gl us(0) n=1} the existence of $u_s$ for all $s<0$ as well as the enclosure $u_s(0)^2\in
    (1/(|s|+1),1/|s|)$. Using this one successively proves $su_s(0)^2\to -1$ and $s(1+su_s(0)^2)\to 0$
    as $s\to -\infty$.
    This implies $W_s(0) = u_s(0)^2/(1+su_s(0)^2) \to +\infty$ as $s\to -\infty$
    so that one expects $\mu_k(s)\to +\infty$ as $s\to -\infty$ for all $k\in\N_0$. In view of
    $\bar\mu_{k_0}<1$ this leads to the natural conjecture that there are infinitely many bifurcating branches
    $(\tilde{\mathcal{C}}_k)_{k\geq k_0}$ also in the parameter range $s<0$.
    \item Our paper does not contain any existence result for fully nontrivial solutions when
    $n\geq 4$ and $\lambda_1\neq \lambda_2$ or $\alpha\neq \beta$. It would be interesting to know whether
    there is a nonexistence result behind or whether an analysis following Remark \ref{Bem ngeq4} yields
    existence of fully nontrivial solutions.
  \end{enumerate}

  \section{Appendix A}
  
  In our proof of the a priori bounds for positive solutions $(u,v)$ of \eqref{Gl equation} we will use the
  notation $s^*:=\min\{\frac{\alpha}{\lambda_1},\frac{\beta}{\lambda_2}\}$ and $u(x)=\hat u(|x|),v(x)=\hat
  v(|x|)$ so that $\hat u,\hat v$ denote the radial profiles of $u,v$. Notice that all nonnegative solutions
  are radially symmetric and radially decreasing by Lemma~3.8 in~\cite{MMP_weakly_coupled}. We want to
  highlight the fact that the main ideas leading to Lemma~\ref{Lem apriori} are taken from section~2
  in \cite{Iko_Uniqueness}.
  
  \begin{lem} \label{Lem apriori}
    Let $n\in\{1,2,3\}$. For all $\eps>0$ there are $c_\eps,C_\eps>0$ such that all nonnegative solutions
    $(u,v)$ of \eqref{Gl equation} for $\lambda_1,\lambda_2,\alpha,\beta\in [\eps,\eps^{-1}]$ and $s\in
    [0,\min\{\frac{\alpha}{\lambda_1},\frac{\beta}{\lambda_2}\}-\eps]$ satisfy 
    $$
      \|u\|_{\lambda_1}+\|v\|_{\lambda_2}< C_\eps,\qquad 
      u(x)+v(x)\leq C_\eps e^{-c_\eps |x|} \quad \text{for all }x\in\R^n.
    $$
  \end{lem}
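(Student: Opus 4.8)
The plan is to obtain first the uniform bound on $\|u\|_{\lambda_1}+\|v\|_{\lambda_2}$ and then to bootstrap it to the pointwise exponential decay, following the scheme of \cite{Iko_Uniqueness}, section~2. For the energy bound I would test the two equations in \eqref{Gl equation} with $u$ and $v$ respectively and add, obtaining
\begin{align*}
  \|u\|_{\lambda_1}^2 + \|v\|_{\lambda_2}^2
  = \int_{\R^n} \frac{Z^2}{1+sZ} \le \int_{\R^n} \frac{\alpha Z u^2 + \beta Z v^2}{1+sZ}\cdot\frac{1}{\min\{\alpha,\beta\}}\cdot\min\{\alpha,\beta\},
\end{align*}
so the real content is to bound $\int_{\R^n} Z^2/(1+sZ)$. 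Since $s<s^*-\eps$, one has $\tfrac{\alpha}{s}>\lambda_1+\delta$ and $\tfrac{\beta}{s}>\lambda_2+\delta$ for a $\delta=\delta(\eps)>0$, so from $g(z)=z-\ln(1+z)\ge 0$ and the Nehari-type identity coming from $I_s'(u,v)[(u,v)]=0$ one gets, exactly as in the proof of Proposition~\ref{Prop cs}, that $\|u\|_{\lambda_1}^2+\|v\|_{\lambda_2}^2 \le \tfrac{\alpha}{s}\|u\|_2^2+\tfrac{\beta}{s}\|v\|_2^2$ together with a reverse-type inequality that forces $\|u\|_2^2+\|v\|_2^2$ to be controlled; combined with the subcritical growth of the nonlinearity and the Gagliardo--Nirenberg inequality in dimensions $n\le 3$ this closes the bound $C_\eps$. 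Alternatively, and perhaps cleaner, one observes that the right-hand side of the first equation is bounded by $\tfrac{\alpha}{s}u\le (\lambda_1+\delta)^{-1}$-type terms minus a genuinely negative correction, and uses the ground-state energy level $c_s$ (which is finite and, by continuity in the parameters, locally bounded on the compact parameter set $[\eps,\eps^{-1}]^4\times[0,s^*-\eps]$) to bound $I_s(u,v)$ and hence $\|u\|_{\lambda_1}^2+\|v\|_{\lambda_2}^2$; this is the route I would actually take since it only needs continuity of $c_s$ in $(\alpha,\beta,\lambda_1,\lambda_2,s)$, which is straightforward from \eqref{Gl cs-Char}.

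For the pointwise decay I would argue as follows. By Lemma~3.8 in~\cite{MMP_weakly_coupled} every nonnegative solution is radial and radially decreasing, so $u(x)\le u(0)$ and $v(x)\le v(0)$. From the $H^1$ bound just obtained and the radial-monotone structure one gets a uniform $L^\infty$ bound: indeed a radially decreasing function in $H^1(\R^n)$ with $\|u\|_{\lambda_1}\le C_\eps$ satisfies $u(R)^2\,|B_R|\le \int_{B_R}u^2\le \lambda_1^{-1}C_\eps^2$, hence $u(R)\le c\,C_\eps R^{-n/2}$ for all $R>0$; taking $R$ of order one already bounds the tail, while near the origin one uses elliptic $L^p$--$W^{2,p}$ estimates (again legitimate for $n\le 3$ where the nonlinearity is subcritical, in fact here it is even of lower order since $|f(Z)Z|\lesssim Z^2\lesssim (u^2+v^2)^2$ and $u,v$ are already in $L^\infty$ away from zero) plus Sobolev embedding to bootstrap $u,v\in L^\infty(\R^n)$ with a bound depending only on $\eps$. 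Once $\|u\|_\infty+\|v\|_\infty\le M_\eps$ is known, the comparison argument of \cite{Iko_Uniqueness} applies: choose $R_\eps$ so large that for $|x|\ge R_\eps$ one has $\dfrac{\alpha Z}{1+sZ}\le \dfrac{\lambda_1}{2}$ and $\dfrac{\beta Z}{1+sZ}\le \dfrac{\lambda_2}{2}$ (possible because $u,v\to 0$ at infinity uniformly in the parameters, by the $R^{-n/2}$ decay above), so that
\begin{align*}
  -\Delta u + \tfrac{\lambda_1}{2}u \le 0,\qquad -\Delta v + \tfrac{\lambda_2}{2}v\le 0 \qquad\text{in }\{|x|>R_\eps\}.
\end{align*}
Comparing $u$ and $v$ on $\{|x|>R_\eps\}$ with the explicit radial supersolutions $A e^{-c_\eps(|x|-R_\eps)}$ of $-\Delta w+\tfrac{\lambda_i}{2}w=0$ (one can take $c_\eps$ a fixed fraction of $\sqrt{\eps/2}$, absorbing the lower-order term $(n-1)/r\cdot w'$ coming from the radial Laplacian by enlarging $R_\eps$), with $A=M_\eps$ on the boundary sphere, the maximum principle yields $u(x)+v(x)\le 2M_\eps e^{-c_\eps(|x|-R_\eps)}$ for $|x|\ge R_\eps$; on the complementary ball one simply uses $u+v\le 2M_\eps\le 2M_\eps e^{c_\eps R_\eps}e^{-c_\eps|x|}$, and setting $C_\eps:=2M_\eps e^{c_\eps R_\eps}$ gives the claimed inequality.

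The main obstacle I anticipate is making the constants genuinely uniform over the full parameter range rather than over a single parameter value: this requires the threshold radius $R_\eps$, the $L^\infty$-bound $M_\eps$ and the decay rate $c_\eps$ to be chosen using only $\eps$ and not the individual solution. This is exactly where the constraint $s\le s^*-\eps$ and $\lambda_i,\alpha,\beta\in[\eps,\eps^{-1}]$ is used, and where one must be a little careful: the uniform $H^1$ bound must come from a quantity — either $c_s$ or the Nehari identity with the gap $\delta(\eps)$ — that is manifestly controlled on the compact parameter set, and the uniform decay of $u,v$ at infinity (needed to locate $R_\eps$) must be read off from the $R^{-n/2}$ estimate, whose constant depends only on $C_\eps$. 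The restriction to $n\le 3$ enters only through the subcriticality used in the elliptic bootstrap for the $L^\infty$ bound; for $n\ge 4$ the ground state $u_0$ of \eqref{Gl scalar GS} at $s=0$ does not even exist, so a uniform bound up to $s=0$ cannot hold in the present form, consistent with Remark~\ref{Bem ngeq4}. \qed
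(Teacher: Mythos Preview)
Your first step --- obtaining the uniform $H^1$ bound before anything else --- does not go through, and this is a genuine gap. The lemma must cover \emph{all} nonnegative solutions; in particular it is invoked in Theorem~\ref{Thm Bifurcation} for the fully nontrivial positive solutions on $\mathcal{C}_0$, which by Theorem~\ref{Thm Ground states} are never ground states once $(\alpha,\lambda_1)\neq(\beta,\lambda_2)$. Hence your preferred route, bounding $I_s(u,v)$ by the ground-state level $c_s$, gives nothing for such solutions. The Nehari route fails for a different reason: testing yields $\|u\|_{\lambda_1}^2+\|v\|_{\lambda_2}^2=\int Z^2/(1+sZ)$, and estimating the right side by $\int Z^2\le C(\|u\|_4^4+\|v\|_4^4)\le C(\|u\|_{\lambda_1}^2+\|v\|_{\lambda_2}^2)^2$ produces only a \emph{lower} bound on the $H^1$ norm. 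The ``reverse-type inequality'' you allude to is precisely the missing idea, and there is no soft substitute (the inequality $\|u\|_{\lambda_1}^2\le\tfrac{\alpha}{s}\|u\|_2^2$ also breaks down at $s=0$, which the lemma must include).

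The paper reverses the order entirely: it first proves the $L^\infty$ bound by a blow-up argument, then the uniform exponential decay, and only then reads off the $H^1$ bound. For the blow-up one assumes $Z_k(0)\to\infty$ along a sequence with parameters accumulating at some $s\in[0,s^*-\eps]$. If $s>0$, the rescaled functions $u_kZ_k(0)^{-1/2},v_kZ_k(0)^{-1/2}$ converge to nonnegative bounded solutions of $-\Delta\phi=(\tfrac{\alpha}{s}-\lambda_1)\phi$ and the analogous equation for $\psi$; the only bounded radial solutions are oscillating Bessel profiles, contradicting nonnegativity. If $s=0$ one first shows $s_kZ_k(0)\to 0$ and then performs the classical blow-up to obtain a nontrivial nonnegative bounded $z=\phi+\psi$ with $-\Delta z\ge c\,z^3$ on $\R^n$; the Liouville theorem (Theorem~8.4 in \cite{QuiSou_Superlinear_parabolic}) then forces $z\equiv 0$. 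This Liouville step is exactly where $n\le 3$ enters --- not in an elliptic bootstrap as you write. Once $L^\infty$ is in hand, your comparison argument for exponential decay is essentially the paper's second step, and the $H^1$ bound is then immediate.
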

  \begin{proof}   
    {\it 1st step: Boundedness in $L^\infty(\R^n)\times L^\infty(\R^n)$.}\; Assume that there is an sequence
    $(u_k,v_k)$ of nonnegative solutions of \eqref{Gl equation} for parameters
    $(\lambda_1)_k,(\lambda_2)_k,\alpha_k,\beta_k\in [\eps,\eps^{-1}]$ and $s_k\in [0,s^*-\eps]$ which is
    unbounded in $L^\infty(\R^n)\times L^\infty(\R^n)$. As always we write $Z_k(x):=\alpha_k u_k(x)^2+\beta_k
    v_k(x)^2$. Passing to a subsequence we may assume $Z_k(0)=\max_{\R^n} Z_k \to\infty$ and
    $((\lambda_1)_k,(\lambda_2)_k,\alpha_k,\beta_k,s_k)\to (\lambda_1,\lambda_2,\alpha,\beta,s)$ for some
    $s\in [0,s^*-\eps]$ and $\lambda_1,\lambda_2,\alpha,\beta\in [\eps,\eps^{-1}]$. Let us distinguish the
    cases $s>0$ and $s=0$ to lead this assumption to a contradiction.
     
    \medskip 
    
    {\it The case $s>0$.\;} The functions 
    $$
      \phi_k:= u_k Z_k(0)^{-1/2}, \qquad \psi_k:=v_k Z_k(0)^{-1/2}
    $$ 
    are bounded in $L^\infty(\R^n)$ and satisfy $\alpha_k\phi_k(0)^2+\beta_k\psi_k(0)^2=1$ as well as
    \begin{align*}
      -\Delta\phi_k + (\lambda_1)_k\phi_k = \alpha_k \phi_k \cdot \frac{Z_k}{1+s_kZ_k} \quad\text{in }\R^n, \\
      -\Delta\psi_k + (\lambda_2)_k\phi_k = \beta_k \psi_k \cdot \frac{Z_k}{1+s_kZ_k} \quad\text{in }\R^n.
    \end{align*}
    Using $Z_k/(1+s_kZ_k)\leq s_k^{-1}=s ^{-1}+o(1)$ and deGiorgi-Nash-Moser estimates we obtain from
    the Arzel\`{a}-Ascoli Theorem that there are bounded nonnegative radially symmetric limit functions
    $\phi,\psi\in C^1(\R^n)$ satisfying $\alpha \phi(0)^2+\beta \psi(0)^2=1$ and 
    $$
      -\Delta \phi + \lambda_1 \phi = \frac{\alpha}{s} \phi\quad\text{in }\R^n,\qquad 
      -\Delta \psi + \lambda_2 \psi = \frac{\beta}{s}\psi\quad\text{in }\R^n.
    $$
    From $\lambda_1<\frac{\alpha}{s}$ and $\lambda_2<\frac{\beta}{s}$ we obtain 
    $$
      \phi(r) = \kappa_1
      r^{\frac{2-n}{2}}J_{\frac{n-2}{2}}\Big(\Big(\frac{\alpha}{s}-\lambda_1\Big)^{1/2}r\Big),\qquad 
      \psi(r) = \kappa_2
      r^{\frac{2-n}{2}}J_{\frac{n-2}{2}}\Big(\Big(\frac{\beta}{s}-\lambda_2\Big)^{1/2}r\Big) \quad\text{for }r\geq 0 $$
    for some $\kappa_1,\kappa_2\in\R$. Since the functions $\phi,\psi$ are nonnegative this is only possible
    in case $\kappa_1=\kappa_2=0$ which contradicts ${\alpha \phi(0)^2+\beta \psi(0)^2=1}$.
    Hence the case $s>0$ does not occur.
    
    \medskip
    
    {\it The case $s=0$.\;} We first show $s_kZ_k\to 0$ uniformly on $\R^n$ which, due to $Z_k(0)=\max_{\R^n}
    Z_k$, is equivalent to proving  $s_kZ_k(0)\to 0$. So let $\kappa$ be an arbitrary accumulation point of
    the sequence $(s_kZ_k(0))_{k\in\N}$ and without loss of generality we assume $s_kZ_k(0)\to \kappa\in
    [0,\infty]$ so that we are left to show $\kappa=0$. To this end set 
    $$
      \phi_k(x):= u_k(\sqrt{s_k}x) Z_k(0)^{-1/2}, \qquad \psi_k(x):=v_k( \sqrt{s_k}x) Z_k(0)^{-1/2}.
    $$ 
    The functions $\phi_k,\psi_k$ satisfy $\alpha_k \phi_k(0)^2+\beta_k\psi_k(0)^2=1$ as well as
    \begin{align*}
      -\Delta\phi_k  + s_k(\lambda_1)_k \phi_k 
      &=\alpha_k \phi_k \cdot \frac{s_kZ_k}{1+s_kZ_k} 
      =  \alpha_k \phi_k \cdot 
      \frac{s_kZ_k(0)(\alpha_k\phi_k^2+\beta_k  \psi_k^2)}{1+s_kZ_k(0)(\alpha_k\phi_k^2+\beta_k \psi_k^2)}
      \qquad\text{in }\R^n, \\
      -\Delta\psi_k + s_k(\lambda_2)_k \psi_k 
      &= \beta_k \psi_k \cdot \frac{s_kZ_k}{1+s_kZ_k}  
      = \beta_k \psi_k \cdot 
      \frac{s_kZ_k(0)(\alpha_k\phi_k^2+\beta_k  \psi_k^2)}{1+s_kZ_k(0)(\alpha_k\phi_k^2+\beta_k \psi_k^2)}
      \qquad\text{in }\R^n.
    \end{align*}
    The Arzel\`{a}-Ascoli Theorem implies that a  subsequence $(\phi_k),(\psi_k)$ converges
    locally uniformly to nonnegative functions $\phi,\psi\in C^1(\R^n)$ satisfying 
    $\alpha\phi(0)^2+\beta\psi(0)^2=1$ and
    \begin{align*}
      -\Delta\phi 
      &= \alpha \phi \cdot \frac{\kappa (\alpha \phi^2+\beta\psi^2)}{ 
        1+\kappa (\alpha \phi^2+\beta\psi^2)}\qquad\text{in }\R^n, \\
      -\Delta\psi 
      &= \beta \psi \cdot \frac{\kappa (\alpha \phi^2+\beta\psi^2)}{ 
        1+\kappa (\alpha \phi^2+\beta\psi^2)}\qquad\text{in }\R^n.
    \end{align*}
    In case $\kappa=+\infty$ we arive at a contradiction as in the case $s>0$ so let us assume
    $\kappa<\infty$. Then $z:=\phi+\psi$ is nonnegative, nontrivial and the inequality
    $\alpha\phi^2+\beta\psi^2\leq \alpha\phi(0)^2+\beta\psi(0)^2=1$ implies 
    \begin{align*}
      -\Delta z 
      &= (\alpha \phi+\beta\psi) \cdot \frac{\kappa (\alpha \phi^2+\beta\psi^2)}{ 
        1+\kappa (\alpha \phi^2+\beta\psi^2)} \\
      &\geq \min\{\alpha,\beta\}(\phi+\psi) \cdot \frac{\kappa}{1+\kappa} (\alpha \phi^2+\beta\psi^2) \\
      &\geq c(\kappa) (\phi+\psi)^3 \\
      &=c(\kappa) z^3 
    \end{align*}
    where $c(\kappa) = \min\{\alpha,\beta\}^2\kappa/(2(1+\kappa))$. From Theorem~8.4
    in~\cite{QuiSou_Superlinear_parabolic} we infer $c(\kappa)=0$ and thus 
    $\kappa=0$. Hence, every accumulation point of the sequence $(s_kZ_k(0))$ is zero so that
    $s_kZ_k$ converges to the trivial function uniformly on $\R^n$.
    
    \medskip
    
    With this result at hand one can use the classical blow-up technique by considering 
    $$
      \tilde\phi_k(x):=u_k(Z_k(0)^{-1/2}x) Z_k(0)^{-1/2},\qquad 
      \tilde\psi_k(x):=v_k(Z_k(0)^{-1/2}x) Z_k(0)^{-1/2}.
    $$ 
    These functions satisfy $\alpha_k\tilde\phi_k(0)^2+\beta_k\tilde\psi_k(0)^2=1$ as well as 
    \begin{align*}
      -\Delta\tilde\phi_k + Z_k(0)^{-1}(\lambda_1)_k \tilde\phi_k 
      &= \alpha_k \tilde\phi_k \cdot \frac{Z_k Z_k(0)^{-1}}{1+s_kZ_k}
      \qquad\text{in }\R^n, \\
      -\Delta\tilde\psi_k + Z_k(0)^{-1}(\lambda_2)_k \tilde\psi_k 
      &= \beta_k \tilde\psi_k \cdot \frac{Z_k  Z_k(0)^{-1}}{1+s_kZ_k}
      \qquad\text{in }\R^n.
    \end{align*}
    Then $s_kZ_k\to 0$ uniformly in $\R^n$ and similar arguments as the ones used above lead to a bounded
    nonnegative nontrivial solution $\phi,\psi$ of
    \begin{align*}
      -\Delta \phi &= \alpha \phi (\alpha\phi^2+\beta\psi^2) \qquad\text{in }\R^n,\\
      -\Delta \psi &= \beta \psi (\alpha\phi^2+\beta\psi^2) \qquad\text{in }\R^n,
	\end{align*}
    which we may lead to a contradiction as above. This finally shows that $Z_k(0)\to \infty$ is impossible
    also in case $s=0$ so that the nonnegative solutions $(u,v)$ of \eqref{Gl equation} are pointwise bounded
    by some constant depending on $\eps$.

    \medskip
    
    {\it 2nd step: Uniform exponential decay.}\; Let us assume for contradiction that there is a sequence
    $(u_k,v_k,s_k)$ of positive solutions of \eqref{Gl equation} satisfying 
    \begin{equation} \label{Gl Lem assumption}
      \hat u_k(r_k)+\hat v_k(r_k)\geq k e^{-r_k/k}\quad\text{for all }k\in\N \text{ and some }r_k>0.
    \end{equation}
    Due to the $L^\infty$-bounds for $(u_k,v_k)$ which we proved in the first step we
    can use deGiorgi-Nash-Moser estimates and the Arzel\`{a}-Ascoli theorem to obtain a smooth bounded
    radially symmetric limit function $(u,v)$ of a suitable subsequence of $(u_k,v_k)$. As a limit of positive
    radially decreasing functions $u,v$ are also nonnegative and radially nonincreasing, in particular we may define 
    $$
      u_\infty := \lim_{r\to\infty} \hat u(r) \geq 0,\quad v_\infty := \lim_{r\to\infty} \hat v(r) \geq 0.
    $$
    Our first aim is to show $u_\infty=v_\infty=0$. Since $(\hat u,\hat v)$ decreases to some limit at
    infinity we have $\hat u'(r),\hat v'(r),\hat u''(r),\hat v''(r)\to 0$ as $r\to\infty$ so that 
    \eqref{Gl equation} implies
    \begin{equation} \label{Gl Apriori bounds 2ndstep I}
      \lambda_1 u_\infty = \frac{\alpha u_\infty Z_\infty}{1+sZ_\infty},\quad 
      \lambda_2 v_\infty = \frac{\beta v_\infty Z_\infty}{1+sZ_\infty}
      \qquad\text{where }Z_\infty = \alpha u_\infty^2+\beta v_\infty^2.
    \end{equation}
    Now define 
    \begin{align*}
      E_k(r) &:= \hat u_k'(r)^2+\hat v_k'(r)^2-\lambda_1 \hat u_k(r)^2-\lambda_2\hat v_k(r)^2 +
      s^{-2}g(sZ_k(r)), \\
      E(r) &:= \hat u'(r)^2+\hat v'(r)^2-\lambda_1 \hat u(r)^2-\lambda_2\hat v(r)^2  + s^{-2}g(sZ(r)). 
    \end{align*}
    The differential equation implies $E_k'(r)=-\frac{2(n-1)}{r}(\hat u_k'(r)^2+\hat v_k'(r)^2)\leq 0$ so that
    $E_k$ decreases to some limit at infinity. The monotonicity of $\hat u_k,\hat v_k$ and
    $\hat u_k(r),\hat v_k(r)\to 0$ as $r\to\infty$ imply that this limit must be 0.
    In particular we obtain $E_k\geq 0$ and the pointwise convergence $E_k\to E$ implies that 
    $E$ is a nonnegative nonincreasing function. From this we obtain
    \begin{align*}
      0 
      &\leq \lim_{r\to\infty} E(r) 
      \;=\; -\lambda_1 u_\infty^2-\lambda_2 v_\infty^2 + s^{-2}g(sZ_\infty) \\
      &\stackrel{\eqref{Gl Apriori bounds 2ndstep I}}{=} - \frac{Z_\infty^2}{1+sZ_\infty}  +
      s^{-2}g(sZ_\infty) \;=\; \frac{1}{s^2}\Big( \frac{sZ_\infty}{1+sZ_\infty}-\ln(1+sZ_\infty)\Big).
    \end{align*}
    This equation implies $Z_\infty=0$ and hence $u_\infty=v_\infty=0$.
    
    \medskip
    
    Now let $\mu$ satisfy $0<\mu<\sqrt{\min\{\lambda_1,\lambda_2\}}$ and choose $\delta>0$.
    Due to $u_\infty=v_\infty=0$ we may choose $r_0>0$ such that $\hat u(r_0)+\hat v(r_0)<\delta/2$ holds.
    From $\hat u_k(r_0)\to \hat u(r_0),\hat v_k(r_0)\to \hat v(r_0)$ and the fact that $\hat u_k,\hat v_k$
    are decreasing we obtain $\hat u_k(r)+\hat v_k(r)\leq \delta$ for all $r\geq r_0$ and all $k\geq k_0$ for
    some sufficiently large $k_0\in\N$. Having chosen $\delta>0$ sufficiently small the inequality $\hat
    u_k',\hat v_k'\leq 0$ implies 
    $$
      - (\hat u_k+\hat v_k)'' + \mu^2 (\hat u_k+\hat v_k) \leq 0  \qquad\text{on }[r_0,\infty) 
      \text{ for all } k\geq k_0.
    $$
    Hence, the maximum principle implies that for any given $R>r_0$ the function $w_R(r):=
    e^{-\mu(r-r_0)}+e^{-\mu(R-r)}$ satisfies $\hat u_k+\hat v_k\leq w_R$ on $(r_0,R)$. Indeed, $w_R$
    dominates $\hat u_k+\hat v_k$ on the boundary of $(r_0,R)$ due to
    $$
      w_R(r_0)=w_R(R)\geq 1\geq \delta\geq (\hat u_k+\hat v_k)(r_0)
      = \max\{(\hat u_k+\hat v_k)(r_0),(\hat u_k+\hat v_k)(R)\}.
    $$
    Sending $R$ to infinity we obtain 
    $$
      (\hat u_k+\hat v_k)(r)\leq e^{-\mu(r-r_0)}\quad\text{for all }r\geq r_0.
    $$
    which, together with the a priori bounds from the first step, yields a contradiction to the
    assumption~\eqref{Gl Lem assumption}. This proves the uniform exponential decay.
    
    \medskip
    
    {\it 3rd step: Conclusion.\;} Given the uniform exponential decay of $(u,v)$ we obtain a uniform bound on
    $\|u\|_{L^4(\R^n)},\|v\|_{L^4(\R^n)}$ which, using the differential equation \eqref{Gl equation}, gives a
    uniform bound on $\|u\|_{\lambda_1},\|v\|_{\lambda_2}$. This finishes the proof.
  \end{proof}
  
  Let us mention that in view of Proposition~\ref{Prop us,vs explode} the a priori bounds from the above Lemma
  cannot be extended to the interval $s\in [0,\min\{\frac{\alpha}{\lambda_1},\frac{\beta}{\lambda_2}\}]$.
  Furthermore, positive solutions of \eqref{Gl equation} are not uniformly bounded for parameters $s$
  belonging to neighbourhoods of $0$ when $n\geq 4$, see Remark~\ref{Bem ngeq4}. Notice that the assumption
  $n\in\{1,2,3\}$ in the proof of the above Lemma only becomes important when we apply Theorem~8.4
  in~\cite{QuiSou_Superlinear_parabolic}.

  \section{Appendix B}
  
  In this section we  show that in the one-dimensional case the function $F(\cdot,s):X\to X$ given by
  \eqref{Gl Def F} is a compact perturbation of the identity for an appriately chosen Banach space $X$ such
  that $\mathcal{T}_1,\mathcal{T}_2$ are continuous curves in $X\times (0,\infty)$. Let $\sigma\in (0,1)$ be
  fixed and set $(X,\skp{\cdot}{\cdot}_X)$ to be the Hilbert space given by
  \begin{align*}
     X &:= \big\{ (u,v) \in H^1_r(\R)\times H^1_r(\R) : \skp{(u,v)}{(u,v)}_X <\infty \big\}, \\
     \skp{(u,v)}{(\tilde u,\tilde v)}_X
     &:= \int_0^\infty e^{2\sigma\mu_1 x}(u'\tilde u'+ \mu_1^2 u \tilde u)\,dx +
     \int_0^\infty e^{2\sigma\mu_2 x} (v'\tilde v'+ \mu_2^2 v\tilde v)\,dx
  \end{align*}
  where $\mu_1:=\sqrt{\lambda_1}$ and $\mu_2:=\sqrt{\lambda_2}$. One may check that $(X,\skp{\cdot}{\cdot}_X)$
  is a Hilbert space and the subspace $C_{0,r}^\infty(\R)\times C_{0,r}^\infty(\R)$ consisting of smooth even
  functions having compact support is dense in $X$. We wil use the formula
  \begin{equation} \label{Gl AppB I}
    \big((-\Delta+\mu^2)^{-1}f\big)(x) 
    = \frac{\mu}{2} \int_\R e^{-\mu|x-y|}f(y)\,dy
    = \int_0^\infty \mu \Gamma(\mu x,\mu y)f(y)\,dy
  \end{equation}
  for all $f\in C_{0,r}^\infty(\R)$ and $\mu>0$ where $\Gamma(x,y)=\frac{1}{2}(e^{-|x-y|}+e^{-|x+y|})$.
  
  \medskip 
  
  {\it Proof of well-definedness:}
  First let us prove the following estimate for all $(u,v)\in X$:
  \begin{equation} \label{Gl AppB II}
    \sqrt{\mu_1}|u(r)|\leq \|(u,v)\|_X e^{-\sigma\mu_1 r}\quad\text{and}\quad
    \sqrt{\mu_2}|v(r)|\leq \|(u,v)\|_X e^{-\sigma\mu_2 r} \qquad (r\geq 0).
  \end{equation}    
  It suffices to prove these inequalities for $u,v\in C_{0,r}^\infty(\R)$. For such functions we
  have 
  \begin{align*}
    \mu_1 u(r)^2
    &\leq 2\mu_1 \int_r^\infty |uu'|\dx
    \leq  e^{-2\sigma\mu_1 r} \int_r^\infty e^{2\sigma\mu_1 x}(u'^2 + \mu_1^2 u^2)\dx
    \leq  \|(u,v)\|_X^2  e^{-2\sigma\mu_1 r}, \\
    \mu_2 v(r)^2
    &\leq 2\mu_2 \int_r^\infty |vv'|\dx
    \leq  e^{-2\sigma\mu_2 r} \int_r^\infty e^{2\sigma\mu_2 x}(v'^2 + \mu_2^2 v^2)\dx
    \leq  \|(u,v)\|_X^2  e^{-2\sigma\mu_2 r}. 
  \end{align*}
  Next, using $u'(0)=v'(0)=0$ and that $u,v$ have compact support, we obtain
  \begin{align*}
    \int_0^\infty e^{2\sigma\mu_1 x}(u'^2+\mu_1^2 u^2)\dx 
    &= \int_0^\infty (e^{2\sigma\mu_1 x}uu')' - 2\sigma\mu_1 e^{2\sigma\mu_1 x}  uu' +  e^{2\sigma\mu_1 x}
        u(-u''+ \mu_1^2 u) \dx\\ 
    &=  - 2\sigma\mu_1 \int_0^\infty e^{2\sigma\mu_1 x}  uu'\dx +  \int_0^\infty e^{2\sigma\mu_1 x}
        u(-u''+\mu_1^2 u) \dx\\ 
    &\leq  \sigma \int_0^\infty e^{2\sigma\mu_1 x} (u'^2+\mu_1^2 u^2)\dx +  \int_0^\infty
        e^{2\sigma\mu_1 x} u(-u''+\mu_1^2 u)\dx.
  \end{align*}
  Performing the analogous rearrangements for $v$ then gives for all $u,v\in C_{0,r}^\infty(\R)$ 
  \begin{align} \label{Gl AppB III} 
    \| (u,v)\|_X^2 
    &\leq \frac{1}{1-\sigma} \int_0^\infty e^{2\sigma\mu_1 x}u(-u''+\mu_1^2 u)\dx  
  		 + \frac{1}{1-\sigma}\int_0^\infty e^{2\sigma\mu_2 x}v(-v''+\mu_2^2 v)\dx.
  \end{align}
  Applying this inequality to $(u,v)=(-\Delta+\mu_1^2)^{-1}(f)\chi_R,(-\Delta+\mu_2^2)^{-1}(g)\chi_R)$ for
  $f,g\in C_{0,r}^\infty(\R)$ and a suitable family $(\chi_R)_{R>0}$ of cut-off functions converging to 1
  we obtain  
  \begin{align*} 
    &\; \Big\| \Big( (-\Delta+\mu_1^2)^{-1}(f),(-\Delta+\mu_2^2)^{-1}(g)\Big) \Big\|_X^2 \\ 
    &\stackrel{\eqref{Gl AppB III}}{\leq} \frac{1}{1-\sigma} \int_0^\infty e^{2\sigma\mu_1 x}
    (-\Delta+\mu_1^2)^{-1}(f)(x)f(x)\dx \\
  	&\;	 + \frac{1}{1-\sigma}\int_0^\infty e^{2\sigma\mu_2 x} (-\Delta+\mu_2^2)^{-1}(g)(x)g(x) \dx \\
    &\stackrel{\eqref{Gl AppB I}}{=} \frac{\mu_1}{1-\sigma} \int_0^\infty\int_0^\infty e^{2\sigma\mu_1 x} 
        \Gamma(\mu_1x,\mu_1y) f(x)f(y)\,dx\,dy \ \\
    &\;  + \frac{\mu_2}{1-\sigma} \int_0^\infty\int_0^\infty e^{2\sigma\mu_2x}
            \Gamma(\mu_2x,\mu_2 y) g(x)g(y) \,dx\,dy \\
    &\leq  \frac{\mu_1}{1-\sigma} \int_0^\infty\int_0^\infty e^{\sigma\mu_1 x}
    e^{\sigma\mu_1 y} |f(x)||f(y)|\,dx\,dy \ \\
    &\;  + \frac{\mu_2}{1-\sigma} \int_0^\infty\int_0^\infty e^{\sigma\mu_2 x} e^{\sigma\mu_2 y}
            |g(x)||g(y)| \,dx\,dy \\
    &= \frac{\mu_1}{1-\sigma} \Big(\int_0^\infty e^{\sigma\mu_1 x}|f(x)|\,dx\Big)^2 
     + \frac{\mu_2}{1-\sigma} \Big(\int_0^\infty e^{\sigma\mu_2 x}|g(x)|\,dx\Big)^2.
  \end{align*}
  Plugging in
  \begin{align*} 
    f 
    := f_{u,v} 
    := \frac{\alpha u Z}{1+sZ} 
    \leq \alpha u(\alpha u^2+\beta v^2),\qquad      
    g 
    := g_{u,v}
    := \frac{\beta vZ}{1+sZ}
    \leq \beta v(\alpha u^2+ \beta v^2)         
  \end{align*}
  and using the estimate \eqref{Gl AppB II} we find that there is a positive number $C$ depending on
  $\sigma,\mu_1,\mu_2,\alpha,\beta$ but not on $u,v$ such that
  \begin{align}\label{Gl AppB IV}
    \Big\| \Big( (-\Delta+\mu_1^2)^{-1}(f_{u,v}),(-\Delta+\mu_2^2)^{-1}(g_{u,v})\Big) \Big\|_X
    \leq C \|(u,v)\|_X^3.
  \end{align} 
  By density of $C_{0,r}^\infty(\R)\times C_{0,r}^\infty(\R)$ in $X$ this inequality also holds for $(u,v)\in
  X$. If now $(u_k,v_k)$ is a sequence in $C_{0,r}^\infty(\R)\times C_{0,r}^\infty(\R)$ converging to
  $(u,v)\in X$ then similar estimates based on \eqref{Gl AppB II} show
  \begin{align*} 
    &\;\Big\| \Big(
    (-\Delta+\mu_1^2)^{-1}(f_{u_k,v_k}-f_{u_m,v_m}),(-\Delta+\mu_2^2)^{-1}(g_{u_k,v_k}-g_{u_m,v_m})\Big)
    \Big\|_X  \\ 
    &\leq C \|(u_k-u_m,v_k-u_m)\|_X (\|(u_k,v_k)\|_X+\|(u_m,v_m)\|_X)^2
  \end{align*} 
  for some $C>0$ implying that $F:X\times
  (0,\infty)\to X$ is well-defined and that \eqref{Gl AppB IV} also holds for $(u,v)\in X$.
     
  \medskip
  
  {\it Proof of compactness of $\Id-F$:} Let now $(u_m,v_m)$ be a bounded sequence in $X$. Then we can without
  loss of generality assume ${(u_m,v_m)\wto (u,v)\in X}$ and $(u_m,v_m)\to (u,v)$ pointwise almost
  everywhere. We set 
  \begin{align*}
    f_m := \frac{\alpha u_m Z_m}{1+sZ_m},\qquad
    g_m := \frac{\beta v_m Z_m}{1+sZ_m},\qquad
    f := \frac{\alpha u Z}{1+sZ},\qquad        
    g := \frac{\beta vZ}{1+sZ}. 
  \end{align*}	  
  where $Z_m:=\alpha u_m^2+\beta v_m^2$ and $Z:=\alpha u^2+\beta v^2$. 
  Then we have $f_m\to f,g_m\to g$ pointwise almost everywhere and the estimate \eqref{Gl AppB II} implies       
  \begin{align}
        |f_m(r)|+|f(r)|
        &\leq \alpha (|u_m(r)|Z_m(r)+|u(r)|Z(r))
         \leq C (e^{-3\sigma\mu_1 r}+ e^{-\sigma(\mu_1+2\mu_2)r}), 
          \label{Bif Prop Eigenschaften Esigma Abfall fn} \\
        |g_m(r)|+|g(r)|
        &\leq \beta (|v_m(r)|Z_m(r)+|v(r)|Z(r))
         \leq C (e^{-3\sigma\mu_2 r}+ e^{-\sigma(\mu_2+2\mu_1)r})
        \label{Bif Prop Eigenschaften Esigma Abfall gn}
  \end{align} 
  for some positive number $C>0$.
  Using the estimate from above we therefore obtain
  \begin{align*}
        &\|(\Id-F)(u_m,v_m)-(\Id-F)(u,v) \|_X^2 \\
        &= \Big\| \Big((-\Delta+\mu_1^2)^{-1}(f_m-f),(-\Delta+\mu_2^2)^{-1}(g_m-g) \Big)
        \Big\|_X^2\\
        &\leq \frac{\mu_1}{1-\sigma} \Big(\int_0^\infty e^{\sigma\mu_1x} 
        |f_m(x)-f(x)|\,dx \Big)^2 + \frac{\mu_2}{1-\sigma}  \int_0^\infty  e^{\sigma\mu_2 x} 
        |g_m(x)-g(x)| \,dx\Big)^2. 
  \end{align*}      
  Using \eqref{Bif Prop Eigenschaften Esigma Abfall fn},\eqref{Bif Prop Eigenschaften Esigma Abfall gn}
  and the dominated convergence theorem we finally get 
  $$
    \|(\Id-F)(u_m,v_m)-(\Id-F)(u,v) \|_X \to 0 \qquad\text{as } m\to\infty
  $$ 
  which is all we had to show.

  \section{Appendix C}
  
  Finally we prove a spectral theoretical result which we used in the proof of Proposition~\ref{Prop
  eigenvalue functions} and for which we could not find a reference in the literature. The key
  ingredient of this result is the min-max-principle for eigenvalues of semibounded selfadjoint Schr\"odinger
  operators, see for instance Theorem~XIII.2 in~\cite{RS_methods_of4}. As
  in Proposition~\ref{Prop eigenvalue functions} we denote by $\mu_k(s)$ $(k\in\N_0)$ 
  the $k$-th eigenvalue of the compact selfadjoint operator
  \begin{equation} \label{Gl AppC Def Ls}
    L_s:H^1_r(\R^n)\to H^1_r(\R^n),\quad
    L_s\phi :=(-\Delta+\lambda)^{-1}(W_s\phi)
  \end{equation} 
  for potentials $W_s$ vanishing at infinity, i.e. $W_s(x)\to 0$ as $|x|\to\infty$. 
  
  \begin{lem}\label{Lem eigenvalues}
    Let $n\in\N$ and $\kappa,\lambda>0,a<b$, let $(W_s)_{s\in (a,b)}$ be a family of radially symmetric
    potentials $W_s:\R^n\to [0,\infty)$ vanishing at infinity and satisfying
    $$
      \text{(i)}\quad \limsup_{s\to b} \|W_s\|_\infty = \kappa\qquad\text{and}\qquad
      \text{(ii)}\quad W_s \to \kappa \text{ locally uniformly as }s\to b.
    $$
    Then for all $k\in\N_0$ we have $\mu_k(s) \to \frac{\kappa}{\lambda}$ as $s\to b$.
  \end{lem}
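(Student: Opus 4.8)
The plan is to deduce the convergence from the Courant--Fischer min-max characterisation of the eigenvalues $\mu_k(s)$ combined with a dilation argument. I would first recall that $L_s$ is a compact, self-adjoint and nonnegative operator on $H^1_r(\R^n)$ endowed with the inner product $\skp{\phi}{\psi}:=\int_{\R^n}\nabla\phi\cdot\nabla\psi+\lambda\phi\psi$ and that $\skp{L_s\phi}{\phi}=\int_{\R^n}W_s\phi^2$ for all $\phi$. By assumption~(ii) the potential $W_s$ is bounded below by $\kappa/2$ on a fixed ball once $s$ is close enough to $b$, so $L_s$ has arbitrarily many positive eigenvalues for such $s$, and the min-max principle (cf.\ Theorem~XIII.2 in~\cite{RS_methods_of4}, applied to $-L_s$) gives
\[
  \mu_k(s)=\max_{\substack{V\subset H^1_r(\R^n)\\ \dim V=k+1}}\ \min_{\phi\in V\setminus\{0\}}\ \frac{\int_{\R^n}W_s\phi^2}{\|\phi\|_\lambda^2}\qquad(k\in\N_0).
\]

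From $0\le W_s\le\|W_s\|_\infty$ and $\|\phi\|_\lambda^2\ge\lambda\|\phi\|_2^2$ one reads off $\mu_k(s)\le\|W_s\|_\infty/\lambda$, and together with~(i) this yields the upper bound $\limsup_{s\to b}\mu_k(s)\le\kappa/\lambda$. For the matching lower bound I would fix $\eps\in(0,\kappa)$, choose linearly independent, smooth, compactly supported radial functions $\psi_0,\ldots,\psi_k$ with support in the unit ball, put $\Lambda:=\max\{\|\nabla\phi\|_2^2/\|\phi\|_2^2 : \phi\in\operatorname{span}\{\psi_0,\ldots,\psi_k\}\setminus\{0\}\}<\infty$, and dilate: $\psi_i^R:=\psi_i(\,\cdot\,/R)$ for large $R$. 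The span $V_R$ of the $\psi_i^R$ is a $(k+1)$-dimensional subspace of $H^1_r(\R^n)$ consisting of functions supported in $\overline{B_R}$, and a change of variables gives $\|\nabla\phi\|_2^2\le R^{-2}\Lambda\,\|\phi\|_2^2$ for every $\phi\in V_R$. Choosing $R$ so that $R^{-2}\Lambda<\eps$ and then, using the local uniform convergence~(ii) on the compact set $\overline{B_R}$, a number $s_0<b$ with $W_s\ge\kappa-\eps$ on $\overline{B_R}$ for $s\in(s_0,b)$, I obtain for every $\phi\in V_R\setminus\{0\}$ and $s\in(s_0,b)$
\[
  \frac{\int_{\R^n}W_s\phi^2}{\|\phi\|_\lambda^2}\ \ge\ \frac{(\kappa-\eps)\,\|\phi\|_2^2}{\|\nabla\phi\|_2^2+\lambda\|\phi\|_2^2}\ \ge\ \frac{\kappa-\eps}{\lambda+\eps}.
\]
Testing the min-max formula with $V=V_R$ then gives $\mu_k(s)\ge(\kappa-\eps)/(\lambda+\eps)$ for $s$ close to $b$, hence $\liminf_{s\to b}\mu_k(s)\ge(\kappa-\eps)/(\lambda+\eps)$, and letting $\eps\downarrow0$ completes the proof.

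I expect the lower bound to be the only real point. What is needed is a \emph{single}, finite-dimensional family of admissible test functions whose Dirichlet energy is negligible compared with their $L^2$-mass while they still feel essentially the full height $\kappa$ of the potential; the dilation $\psi\mapsto\psi(\,\cdot\,/R)$ achieves both at once because the gradient term scales like $R^{n-2}$ against $R^{n}$ for the mass, in every space dimension $n$, so no dimensional restriction enters. A minor technical point is to guarantee that $L_s$ indeed possesses a $(k+1)$-st positive eigenvalue for $s$ near $b$, so that the displayed min-max value is a genuine eigenvalue rather than $\inf\sigma_{\mathrm{ess}}(-L_s)=0$; this follows at once from~(ii) (or from the lower bound construction itself).
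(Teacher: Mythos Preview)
Your proof is correct and takes a genuinely different route from the paper's. Both arguments share the easy upper bound via the min-max principle and assumption~(i), and both reduce the lower bound to a comparison with a potential that equals $\kappa-\eps$ on a large ball. The difference lies in how that comparison is exploited. The paper passes to the explicit operator $M_\eps\phi=(-\Delta+\lambda)^{-1}\big((\kappa-\eps)1_{B_{1/\eps}}\phi\big)$, solves the associated radial eigenvalue problem in closed form with Bessel functions $J_{(n-2)/2}$ and $K_{(n-2)/2}$, and uses a continuity/zero-counting argument to show that $\omega_k^\eps=(\kappa-\eps)/\mu_k^\eps-\lambda\to 0$. You bypass all of this by a direct variational argument: a single dilation $\psi\mapsto\psi(\cdot/R)$ of a fixed $(k+1)$-dimensional space of radial test functions kills the gradient contribution at rate $R^{-2}$, so the Rayleigh quotient on the whole subspace is driven to $(\kappa-\eps)/(\lambda+\eps)$. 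Your approach is more elementary and dimension-independent in spirit (no special functions, no ODE matching), and it makes transparent why the result is really a statement about approximate constants in large balls; the paper's approach, while heavier, yields the exact eigenfunctions of the comparison operator, information that is not actually used elsewhere.
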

  \begin{proof}
    The min-max-principle and (i) imply 
    $$
      \limsup_{s\to b} \mu_k(s) 
      \leq \limsup_{s\to b} \frac{\|W_s\|_\infty}{\lambda} 
      = \frac{\kappa}{\lambda}.
    $$
    So it remains to show the corresponding estimate from below. Given the assumptions $W_s\geq 0$ and (ii) we
    find that it is sufficient to show $\mu_k^\eps\to \frac{\kappa}{\lambda}$ as $\eps\to 0$ where
    $\mu_k^\eps$ denotes the $k$-th eigenvalue of the compact self-adjoint operator $M_\eps:H^1_r(\R^n)\to
    H^1_r(\R^n)$ defined by $M_\eps\phi= (-\Delta+\lambda)^{-1}((\kappa-\eps)1_{B_{1/\eps}}\phi)$. Here,
    $1_{B_{1/\eps}}$ denotes the indicator function of the ball in $\R^n$ centered at the origin with radius
    $1/\eps$. Since $\eps\to M_\eps$ is continuous on $(0,\infty)$ with respect to the operator norm  
    the min-max characterization of the eigenvalues implies that
    $$
      \eps\mapsto \omega_k^\eps \text{ is continuous on }(0,\infty)
      \quad\text{where}\quad
      \omega_k^\eps := \frac{\kappa-\eps}{\mu_k^\eps}-\lambda.
    $$
    By definition of $\mu_k^\eps,\omega_k^\eps$ the boundary value problem 
    \begin{align*}
      &-\phi''(r) - \frac{n-1}{r}\phi'(r) = \omega_k^\eps \phi(r) &&\hspace{-4cm}\text{for } 0\leq r\leq
      \eps^{-1}, \\
      &-\phi''(r) - \frac{n-1}{r}\phi'(r) +\lambda \phi(r) = 0 &&\hspace{-4cm}\text{for } r\geq \eps^{-1}, \\
      \phi'(0)&=0,\quad \phi(r)\to 0 \;\text{ as }r\to\infty,\quad\phi\in C^1([0,\infty)). 
    \end{align*}
    has a nontrivial solution. Testing the differential equation on $[0,\eps^{-1}]$ with $\phi$ we obtain
    $\omega_k^\eps>0$. Hence, $\phi$ is given by  
    $$
      \phi(r)= \alpha\cdot  
       \begin{cases}
        c \cdot r^{\frac{2-n}{2}}J_{\frac{n-2}{2}}(\sqrt{\omega_k^\eps}\, r) &\text{if }r\leq \eps^{-1}, \\
        r^{\frac{2-n}{2}}K_{\frac{n-2}{2}}(\sqrt{\lambda}r) &\text{if }r\geq \eps^{-1}
      \end{cases} 
    $$
    for some $\alpha\neq 0$. Here, $K$ denotes the modified Bessel function of the second kind and $J$
    represents the Bessel function of the first kind. From $\phi\in C^1([0,\infty))$ we get the following
    conditions on $c$ and $\omega_k^\eps$:
    $$
      K_{\frac{n-2}{2}}(\sqrt{\lambda}\eps^{-1}) 
      = c J_{\frac{n-2}{2}}(\sqrt{\omega_k^\eps}\, \eps^{-1}),\qquad 
      \sqrt{\lambda} K_{\frac{n-2}{2}}'(\sqrt{\lambda}\eps^{-1}) 
      = \sqrt{\omega_k^\eps} c J_{\frac{n-2}{2}}'(\sqrt{\omega_k^\eps}\, \eps^{-1}).
    $$
    Due to the continuity of $\eps\to \omega_k^\eps$ on $(0,\infty)$ and due to the fact that $K$ is
    positive whereas $J$ has infinitely many zeros going off to infinity we infer that
    $\sqrt{\omega_k^\eps} \eps^{-1}$ is bounded on $(0,\infty)$. In particular this gives 
    $\omega_k^\eps\to 0$ and thus $\mu_k^\eps\to \frac{\kappa}{\lambda}$ as $\eps\to 0$
    which is all we had to show.
  \end{proof}
  
  
  \section*{Acknowledgements} 
    The work on this project was supported by the Deutsche Forschungsgemeinschaft 
    (DFG, German Research Foundation) - grant number MA 6290/2-1.

 \bibliographystyle{plain}

\end{document}